\theoremstyle{plain}
\newtheorem{theorem}{Theorem}[section]
\newtheorem{proposition}[theorem]{Proposition}
\newtheorem{corollary}[theorem]{Corollary}
\theoremstyle{definition}
\newtheorem{definition}[theorem]{Definition}
\newtheorem{definition_basic_facts}[theorem]{Definition and basic facts}
\newtheorem{remark}[theorem]{Remark}
\newtheorem{example}[theorem]{Example}
\newcommand{\F}{{\mathbb{F}}}
\newcommand{\M}{{\mathbb{M}}}
\newcommand{\PM}{{\mathbb{PM}}}
\newcommand{\PW}{{\mathbb{PW}}}
\newcommand{\CC}{{\mathcal{C}}}
\newcommand{\DC}{{\mathcal{D}}}
\newcommand{\am}{{\mathbf{a}}}
\newcommand{\bm}{{\mathbf{b}}}
\newcommand{\cm}{{\mathbf{c}}}
\newcommand{\dm}{{\mathbf{d}}}
\newcommand{\eem}{{\mathbf{e}}}
\newcommand{\fm}{{\mathbf{f}}}
\newcommand{\wm}{{\mathbf{w}}}
\newcommand{\p}{{\mathfrak{p}}}
\newcommand{\tu}{{\textunderscore}}
\begin{document}

\title[Polarization of neural codes] {Polarization of neural codes}

\author{Katie Christensen}
\address{Department of Mathematics\\ 
University of Louisville\\
Louisville, KY 40292, USA}
\email{katie.christensen@louisville.edu}
\thanks{$\dag$ the corresponding author}
\author{Hamid Kulosman$^\dag$}
\address{Department of Mathematics\\ 
University of Louisville\\
Louisville, KY 40292, USA}
\email{hamid.kulosman@louisville.edu}

\subjclass[2010]{Primary 13B25, 13F20, 13P25; Se\-con\-dary 92B20, 94B60}

\keywords{Neural code; Neural ideal; Canonical form; Minimal prime ideals; Motifs; Polarization; Pseudomonomial ideals; Square free monomial ideals}

\date{}

\begin{abstract} 
The neural rings and ideals as an algebraic tool for analyzing the intrinsic structure of neural codes were introduced by C.~Curto et al. in 2013. Since then they were investigated in several papers, including the 2017 paper by G\"unt\"urk\"un et al., in which the notion of polarization of neural ideals was introduced. In this paper we extend their ideas by introducing the notions of polarization of motifs and neural codes. We show that the notions that we introduced have very nice properties which could allow the studying of the intrinsic structure of neural codes of length $n$ via the square free monomial ideals in $2n$ variables and interpreting the results back in the original neural code ambient space. 

In the last section of the paper we introduce the notions of inactive neurons, partial neural codes, and partial motifs, as well as the notions of polarization of these codes and motifs. We use these notions to give a new proof of a theorem from the paper  by G\"unt\"urk\"un et al. that we mentioned above.
\end{abstract}

\maketitle

\section{Introduction}

The neural rings and ideals as an algebraic tool for analyzing the intrinsic structure of neural codes were introduced by C.~Curto et al. in 2013 in the pioneering paper \cite{civy}. In order to make our paper self-contained, we will give in this section all the definitions and facts from \cite{civy} that we are going to use, which are related to neural codes. All other notions and facts (that we assume are well-known) can be found either in \cite{civy}, or in the standard references \cite{am} and \cite{cls}.

\begin{definition_basic_facts}[{\cite{civy}}]
An element $\wm=w_1\dots w_n$ of $\F_2^n$ is called a {\it word} (of length $n$).
A set $\mathcal{C}\subseteq \F_2^n$ is called a {\it neural code}, shortly {\it code} (of length $n$). We also call the subsets of $\F_2^n$ {\it varietes} in $\F_2^n$. 
The code $\DC=\F_2^n\setminus \CC$ is called the {\it complement} of the code $\CC$ and is denoted by $^c\CC$.
We denote $\M=\{0,1,*\}$. We say that this set is the {\it set of motifs of length $1$}. We define a partial order on $\M$ by declaring that $0<*$ and $1<*$. A sequence $\am=a_1\dots a_n\in\M^n$ is called a {\it motif} ({\it of length $n$}).We define a partial order on the set $\M^n$ by declaring that $\am\le \bm$ if $a_i\le b_i$ for every $i\in [n]$.  In other words, $\am\le\bm$ if for each $i\in [n]$, $b_i=0$ (resp. $1$) implies $a_i=0$ (resp. $1$). We have
\[\am\le\bm \; \Leftrightarrow \; V_\am\subseteq V_\bm.\] 
For $\am \in\M^n$, the subset $V_\am$ of $\F_2^n$ consisiting of all the words $\wm$ obtained by replacing the stars of $\am$ by elements of $\F_2$  is called the {\it variety} of $\am$.

For a code $\mathcal{C}\subseteq \F_2^n$, a motif $\am$ of length $n$ is called a {\it motif of $\mathcal{C}$} if $V_\am \subseteq\mathcal{C}$. The set of all motifs of $\mathcal{C}$ is denoted by $\mathrm{Mot}(\mathcal{C})$. A motif $\am\in\mathrm{Mot}(\mathcal{C})$ is called a {\it maximal motif} of $\mathcal{C}$ if for any motif $\bm\in\mathrm{Mot}(\mathcal{C})$, $\am\le\bm$ implies $\am=\bm$. The set of all maximal motifs of $\mathcal{C}$ is denoted by $\mathrm{MaxMot}(\mathcal{C})$. For any $\am\in \mathrm{Mot}(\mathcal{C})$ there is a $\bm\in \mathrm{MaxMot}(\mathcal{C})$ such that $\am\le\bm$. We have $\CC=\emptyset$ if and only if $\mathrm{MaxMot}(\mathcal{C})=\emptyset$. Moreover, for any two codes $\CC^1$ and $\CC^2$, 
\[\CC^1=\CC^2 \Leftrightarrow \mathrm{MaxMot}(\CC^1)=\mathrm{MaxMot}(\CC^2).\]
\end{definition_basic_facts}

\begin{remark}[{\cite[pages 1593 and 1594]{civy}}]\label{MaxMot_subset}
We have
\begin{equation*}
\mathcal{C}=\cup\, \{V_\am\;:\;\am\in\mathrm{MaxMot}(\mathcal{C})\},
\end{equation*}
however it can happen that for a proper subset $M$ of $\mathrm{MaxMot}(\mathcal{C})$ we still have
\begin{equation*}
\mathcal{C}=\cup\, \{V_\am\;:\;\am\in M\}.
\end{equation*}
For example, consider the neural code $\mathcal{C}=\{000, 001, 011, 111\}\subseteq \F_2^3$. Then $\mathrm{MaxMot}(\mathcal{C})=\{00*, 0\!*\!1, *11\}$, however $\mathcal{C}=V_{00*}\cup V_{*11}$.
\end{remark}

\begin{definition}[{\cite{cls}}, {\cite{civy}}]
For a variety $V\subseteq \F_2^n$ we define the {\it ideal of $V$}, $\mathcal{I}(V)\subseteq \F_2[X_1, \dots, X_n]$,  in the following way: 
\begin{equation*}
\mathcal{I}(V)=\{f\in \F_2[X_1,\dots, X_n]\;:\: f(\wm)=0 \text{ for all } \wm\in V\}.
\end{equation*}

(Note that for any variety $V$ in $\F_2^n$ we have $\mathcal{I}(V)\supseteq \mathcal{B}$, where $\mathcal{B}=(X_1^2-X_1, \dots, X_n^2-X_n)$ is the {\it Boolean ideal} of $\F_2[X_1, \dots, X_n]$. Moreover, for $V\subseteq \F_2^n$ we have $\mathcal{I}(V)=\mathcal{B}$ if and only if $V=\F_2^n$.)

For an ideal $I\subseteq \F_2[X_1, \dots, X_n]$ we define the {\it variety of $I$}, $\mathcal{V}(I)\subseteq \F_2^n$, in the following way: 
\begin{equation*}
\mathcal{V}(I)=\{\wm\in\F_2^n\;:\: f(\wm)=0 \text{ for all } f\in I\}.
\end{equation*}
\end{definition}

\begin{theorem}[{\cite{civy}}, {\cite{g}}]
For every variety $V\subseteq \F_2^n$ we have
\begin{equation*}
\mathcal{V}(\mathcal{I}(V))=V.
\end{equation*}
For every ideal $I\subseteq \F_2[X_1, \dots, X_n]$ we have
\begin{equation*}
\mathcal{I}(\mathcal{V}(I))=\sqrt{I}=I+\mathcal{B}.
\end{equation*}
\end{theorem}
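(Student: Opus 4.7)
The plan is to handle the two equalities in turn. For $\mathcal{V}(\mathcal{I}(V)) = V$, the inclusion $V \subseteq \mathcal{V}(\mathcal{I}(V))$ is immediate from the definitions. For the reverse inclusion, I would introduce, for each $\wm = w_1\cdots w_n \in \F_2^n$, the \emph{indicator polynomial} $\rho_{\wm} = \prod_{i=1}^n (X_i + w_i + 1)$, which over $\F_2$ evaluates to $1$ at $\wm$ and to $0$ at every other point of $\F_2^n$. If $\wm \notin V$, then $\rho_{\wm}$ vanishes identically on $V$, so it lies in $\mathcal{I}(V)$, while $\rho_{\wm}(\wm) = 1 \neq 0$; this exhibits $\wm$ as not belonging to $\mathcal{V}(\mathcal{I}(V))$.

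For the chain $\mathcal{I}(\mathcal{V}(I)) = \sqrt{I} = I + \mathcal{B}$, the inclusion $I + \mathcal{B} \subseteq \mathcal{I}(\mathcal{V}(I))$ follows directly from the definitions together with the parenthetical fact that elements of $\mathcal{B}$ vanish on all of $\F_2^n$. The nontrivial step $\mathcal{I}(\mathcal{V}(I)) \subseteq I + \mathcal{B}$ I would obtain by passing to the quotient $R = \F_2[X_1,\dots,X_n]/\mathcal{B}$ and identifying it explicitly with the function algebra $\F_2^{\F_2^n}$ via the evaluation map $\bar{f} \mapsto (f(\wm))_{\wm \in \F_2^n}$. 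Surjectivity follows because the reductions of the $\rho_{\wm}$ realize the coordinate idempotents, and injectivity follows by a dimension count (both sides have dimension $2^n$ over $\F_2$, with the square-free monomials forming a basis on the left). Under this identification the image $\bar{I}$ of $I$ is an ideal in a finite product of fields, and any such ideal coincides with the full ideal of functions vanishing on its common zero set; since that common zero set equals $\mathcal{V}(I)$, every $f$ vanishing on $\mathcal{V}(I)$ reduces to an element of $\bar{I}$, which is to say $f \in I + \mathcal{B}$.

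Finally, the equality $\sqrt{I} = I + \mathcal{B}$ is a direct consequence of the same structural observation: since $R \cong \F_2^{\F_2^n}$ is a product of copies of $\F_2$, it is reduced, so every ideal of $R$ is its own radical; in particular $(I + \mathcal{B})/\mathcal{B}$ is radical in $R$, which, combined with the preceding paragraph, pins down $\mathcal{I}(\mathcal{V}(I)) = \sqrt{I} = I + \mathcal{B}$. I expect the main obstacle to be precisely this explicit identification of $R$ with $\F_2^{\F_2^n}$: once that isomorphism is in place, both the Nullstellensatz-style correspondence between ideals and subsets and the reducedness of the quotient come for free, whereas without it one has no effective way to recover a polynomial modulo $\mathcal{B}$ from its values on $\F_2^n$.
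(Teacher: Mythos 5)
The paper does not actually prove this theorem --- it records it as a cited fact, attributed to Curto et al.\ and to Germundsson --- so there is no in-paper argument to measure yours against. Taken on its own terms, your proof of $\mathcal{V}(\mathcal{I}(V)) = V$ and of $\mathcal{I}(\mathcal{V}(I)) = I + \mathcal{B}$ is correct and is the standard one: the indicator polynomial $\rho_{\wm}$ separates a missing point from $V$; the evaluation map realizes $\F_2[X_1,\dots,X_n]/\mathcal{B}$ as the function algebra $\F_2^{\F_2^n}$ (surjective via the $\rho_{\wm}$, injective by a $2^n$-dimension count with square-free monomials); and in a finite product of fields every ideal is exactly the ideal of tuples vanishing on its zero set, which transports directly to $\bar{I} = (I+\mathcal{B})/\mathcal{B}$ being the reduction of $\mathcal{I}(\mathcal{V}(I))$.

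The one step to flag is your handling of $\sqrt{I} = I + \mathcal{B}$. The reducedness of $R \cong \F_2^{\F_2^n}$ shows that $(I+\mathcal{B})/\mathcal{B}$ is a radical ideal of $R$, hence that $I+\mathcal{B}$ is radical in $\F_2[X_1,\dots,X_n]$; in other words you have proved $\sqrt{I+\mathcal{B}} = I+\mathcal{B}$, not $\sqrt{I} = I+\mathcal{B}$. Those two agree only when $\mathcal{B}\subseteq\sqrt{I}$, and that fails in general: for $I = 0$ one has $\sqrt{0} = 0$ in the polynomial ring (it is a domain), while $I + \mathcal{B} = \mathcal{B} \ne 0$. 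So the displayed equality $\sqrt{I} = I + \mathcal{B}$ is, as literally written, false for general $I$; it is either a slip in the theorem's statement or a nonstandard reading of $\sqrt{\,\cdot\,}$, and your write-up should state its actual conclusion $\mathcal{I}(\mathcal{V}(I)) = \sqrt{I+\mathcal{B}} = I+\mathcal{B}$ rather than silently present the reducedness argument as proving $\sqrt{I} = I + \mathcal{B}$.
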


The second formula in the previous theorem is called the {\it Hilbert's Nullstellensatz for $\F_2$}.

\begin{definition}[{\cite{civy}}, {\cite{g}}]
For a motif $\am\in \M^n$ we define the {\it Lagrange polynomial} of $\am$, $L_\am\in \F_2[X_1,\dots, X_n]$, in the following way:
\begin{equation*}
L_\am=\prod_{a_i=1} X_i \prod_{a_j=0} (1-X_j).
\end{equation*}
\end{definition}

Note that for any word $\wm\in \F_2^n$, $L_\am(\wm)=1$ if and only if $\wm\in V_\am$ (i.e., $L_\am(\wm)=0$ if and only if $\wm\notin V_\am$).

\begin{definition}[{\cite[page 1582]{civy}}]
For a neural code $\CC\subseteq\F_2^n$ we define the {\it neural ideal} of $\CC$, $J_\CC\subseteq \F_2[X_1,\dots, X_n]$, in the following way:
\begin{equation*}
J_\CC=(\{L_\wm\;:\;\wm\in \,^c\CC\}).
\end{equation*}
\end{definition}

\begin{proposition}[{\cite[Lemma 3.2]{civy}}]
For a neural code $\CC\subseteq \F_2^n$ we have:
\begin{align*}
\mathcal{V}(J_\CC) & = \CC,\\
\mathcal{I}(\CC) & = J_\CC+\mathcal{B}.
\end{align*}
\end{proposition}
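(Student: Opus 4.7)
The plan is to prove the two equalities in order, using the first to obtain the second essentially for free via the Nullstellensatz stated in the preceding theorem.

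For the first equality $\mathcal{V}(J_\CC)=\CC$, I would use the key property of Lagrange polynomials noted just before the definition of $J_\CC$: for a word $\wm \in \F_2^n$ (which has no stars), the variety $V_\wm$ is the singleton $\{\wm\}$, so for any $\vm \in \F_2^n$ we have $L_\wm(\vm)=1$ if $\vm=\wm$ and $L_\wm(\vm)=0$ otherwise. To show $\CC \subseteq \mathcal{V}(J_\CC)$, take $\vm \in \CC$; then for every generator $L_\wm$ of $J_\CC$ (with $\wm \in {}^c\CC$) we have $\vm \neq \wm$, hence $L_\wm(\vm)=0$. Since the generators of $J_\CC$ all vanish at $\vm$, every element of $J_\CC$ vanishes at $\vm$, i.e.\ $\vm\in\mathcal{V}(J_\CC)$. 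For the reverse inclusion, I argue by contrapositive: if $\vm \notin \CC$, then $\vm \in {}^c\CC$, so $L_\vm$ is one of the generators of $J_\CC$; but $L_\vm(\vm)=1\neq 0$, so $\vm \notin \mathcal{V}(J_\CC)$.

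For the second equality, I would simply invoke the Nullstellensatz for $\F_2$ stated in the previous theorem with $I=J_\CC$, giving
\[
\mathcal{I}(\mathcal{V}(J_\CC)) = J_\CC + \mathcal{B}.
\]
Substituting the equality $\mathcal{V}(J_\CC)=\CC$ just established yields $\mathcal{I}(\CC)=J_\CC+\mathcal{B}$.

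There is no real obstacle here: the argument is essentially a translation between the combinatorial description of a code and the algebraic description via its generating Lagrange polynomials. The only point that needs to be handled carefully is the observation that $L_\wm(\vm)$ acts as an indicator function on singletons, which is what makes the generators of $J_\CC$ a clean cut-out of the complement.
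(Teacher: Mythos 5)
The paper states this proposition by reference to \cite[Lemma 3.2]{civy} and gives no proof of its own, so there is nothing in the paper to contrast against. Your argument is correct and is the standard one: Lagrange polynomials of stars-free words act as indicator functions of singletons, which makes $\mathcal{V}(J_\CC)=\CC$ immediate by the two inclusions you give, and the second identity then falls out of the $\F_2$-Nullstellensatz $\mathcal{I}(\mathcal{V}(I))=I+\mathcal{B}$ stated just above.
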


\begin{definition}[{\cite[page 1585]{civy}}]
A polynomial $f\in \F_2[X_1,\dots, X_n]$ is called a {\it pseudo-monomial} if it has the form
\begin{equation*}
f=\prod_{i\in\sigma} X_i\, \prod_{j\in\tau} (1-X_j)
\end{equation*}
for some $\sigma, \tau\subseteq [n]=\{1,\dots, n\}$ with $\sigma\cap \tau=\emptyset$.

An ideal $I\subseteq \F_2[X_1,\dots, X_n]$ is called a {\it pseudo-monomial ideal} if $I$ can be generated by a finite set of pseudo-monomials. 
\end{definition}

\begin{definition}[{\cite[page 1585]{civy}}]
Let $I$ be an ideal in $\F_2[X_1,\dots, X_n]$ and $f\in I$ a pseudo-monomial. We say that $f$ is a {\it minimal pseudo-monomial} of $I$ if there does not exist another pseudo-monomial $g\in I$ such that $\deg(g)<\deg(f)$ and $g\mid f$ in $F_2[X_1,\dots, X_n]$.
\end{definition}

\begin{definition}[{\cite[page 1585]{civy}}]
Let $I$ be a pseudo-monomial ideal in $F_2[X_1,\dots, X_n]$. We call the (finite) set $CF(I)$, consisting of all minimal pseudo-monomials of $I$, the {\it canonical form} of $I$.
\end{definition}

\begin{remark}[{\cite[page 1585]{civy}}]
Clearly, for any pseudo-monomial ideal $I$ of $F_2[X_1,\dots, X_n]$ , $CF(I)$ is unique and $I=(CF(I))$. On the other hand, $CF(I)$ is not necessarily a minimal generating set of $I$. For example, consider the ideal $I=(X_1(1-X_2), X_2(1-X_3))$. This ideal contains a third minimal pseudo-monomial: $X_1(1-X_3)=(1-X_3)\cdot [X_1(1-X_2)] + X_1\cdot [X_2(1-X_3)]$, so that $CF(I)=\{(X_1(1-X_2), X_2(1-X_3), X_1(1-X_3)\}$, which is not a minimal generating set of $I$.
\end{remark}

\begin{proposition}[{\cite[Proposition 4.5]{civy}}]\label{max_mot_complement}
Let $\CC\subseteq \F_2^n$ be a code in $\F_2^n$ and $\DC$ its complement. Let
\[\mathrm{MaxMot}(\CC)=\{\am^1, \dots, \am^l\}.\]
Then 
\begin{align*}
\mathrm{MaxMot}(\DC)=\,&\{\bm=b_1\dots b_n\;:\;\\
                                 &[\,(\forall b_i\ne \ast)(\exists \am^j)\; b_i=\overline{\am^j_i}\,]\;\; \mathrm{and}\\
                                 &[\,(\forall \am^j\ne \ast\dots\ast)(\exists b_i\ne\ast)\; b_i=\overline{\am^j_i}\,]\;\; \mathrm{and}\\
                                 &[\,\text{$\bm$ is maximal with respect to these two properties}\,]\}.
\end{align*}
In particular, if $\mathrm{MaxMot}(\CC)=\{\ast\dots\ast\}$, then $\mathrm{MaxMot}(\DC)=\emptyset$.
\end{proposition}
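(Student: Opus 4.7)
The plan is to translate the condition $V_\bm \subseteq \DC$ into a coordinate-wise statement about $\mathrm{MaxMot}(\CC)$ and then handle maximality by a short contradiction argument on each side. Since $\CC = \bigcup_{j=1}^{l} V_{\am^j}$ by Remark \ref{MaxMot_subset}, we have $\bm \in \mathrm{Mot}(\DC)$ iff $V_\bm \cap V_{\am^j} = \emptyset$ for every $j$. Two motif-varieties $V_\bm$ and $V_{\am^j}$ fail to intersect precisely when there is some coordinate $i$ with $b_i \neq \ast$, $\am^j_i \neq \ast$, and $b_i = \overline{\am^j_i}$, since otherwise one can fill in the remaining $\ast$'s compatibly to produce a word in the intersection. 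Assuming $\ast\ldots\ast \notin \mathrm{MaxMot}(\CC)$, this shows that condition (ii) is precisely the statement $\bm \in \mathrm{Mot}(\DC)$. The ``In particular'' assertion is then immediate, since $\mathrm{MaxMot}(\CC) = \{\ast\ldots\ast\}$ forces $\CC = \F_2^n$ and so $\DC = \emptyset$.

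For the forward direction, suppose $\bm \in \mathrm{MaxMot}(\DC)$. Condition (ii) holds by the translation above. To prove (i), assume for contradiction that some non-star coordinate $b_i$ satisfies $b_i \neq \overline{\am^j_i}$ for every $j$, and let $\bm^{(i)}$ denote the motif obtained from $\bm$ by replacing $b_i$ with $\ast$. For each $\am^j$, pick a blocking coordinate $k$ with $b_k = \overline{\am^j_k}$ (furnished by (ii)); by hypothesis $k \neq i$, so $\bm^{(i)}$ still blocks $\am^j$ at $k$, giving $\bm^{(i)} \in \mathrm{Mot}(\DC)$ with $\bm^{(i)} > \bm$, contradicting maximality. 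The maximality required in (iii) follows automatically since every motif satisfying (i) and (ii) lies in $\mathrm{Mot}(\DC)$.

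For the reverse direction, suppose $\bm$ satisfies (i), (ii), and is maximal with respect to them. Then $\bm \in \mathrm{Mot}(\DC)$ by (ii). If some $\bm' > \bm$ were in $\mathrm{Mot}(\DC)$, then $\bm'$ would satisfy (ii) by the translation; moreover, since $\bm' > \bm$ only replaces non-stars of $\bm$ by $\ast$, every non-star $b'_i$ of $\bm'$ equals the corresponding $b_i$, so (i) for $\bm$ at index $i$ transfers directly to (i) for $\bm'$. This contradicts the maximality hypothesis, hence $\bm \in \mathrm{MaxMot}(\DC)$.

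The main obstacle I anticipate is the bookkeeping in the first step: pinning down the precise combinatorial condition for $V_\bm \cap V_{\am^j} = \emptyset$ and correctly handling the edge case where some $\am^j$ equals $\ast\ldots\ast$. Once the translation of (ii) is in place, the maximality analysis reduces to the two short contradiction arguments outlined above.
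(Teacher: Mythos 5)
Your proof is correct, and it takes a genuinely different route from the paper: the paper's ``proof'' is a single sentence deferring to Proposition 4.5 and Corollary 5.5 of Curto et~al.\ \cite{civy}, whereas you build a self-contained argument from first principles. Your key observation---that, away from the edge case, condition (ii) by itself is exactly the statement $V_\bm\cap V_{\am^j}=\emptyset$ for all $j$, i.e.\ $\bm\in\mathrm{Mot}(\DC)$, while condition (i) records the ``no wasted coordinate'' property that maximality in $\mathrm{Mot}(\DC)$ forces---cleanly separates the roles of the two conditions, and the two short contradiction arguments then close the set equality in both directions. The translation of $V_\bm\cap V_{\am^j}=\emptyset$ into ``some coordinate $i$ has $b_i\ne\ast$, $\am^j_i\ne\ast$, $b_i=\overline{\am^j_i}$'' is the right elementary criterion, and the transfer of condition (i) from $\bm$ to any $\bm'>\bm$ in the reverse direction is justified because enlarging a motif only turns non-stars into stars. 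One small point worth flagging: in the edge case $\mathrm{MaxMot}(\CC)=\{\ast\dots\ast\}$ the displayed set on the right of the proposition actually evaluates to $\{\ast\dots\ast\}$, not $\emptyset$ (for $\bm=\ast\dots\ast$ both (i) and (ii) hold vacuously), so the ``in particular'' clause is an overriding correction rather than a consequence of the formula; your decision to argue that case directly from $\DC=\emptyset$ rather than from the formula is therefore exactly the right move, and the main formula should be read as applying only when $\ast\dots\ast\notin\mathrm{MaxMot}(\CC)$.
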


\begin{proof}
The proposition follows from Proposition 4.5 and Corollary 5.5 from \cite{civy}.
\end{proof}

\begin{proposition}[{\cite[Lemma 5.7]{civy}}]\label{CF_max_mot_compl}
Let $\CC\subseteq\F_2^n$ be a neural code and $J_\CC$ the neural ideal of $\CC$. Then
\begin{equation*}
CF(J_\CC)=\{L_\am\;:\;\am\in\mathrm{MaxMot}(^c\CC)\}.
\end{equation*}
\end{proposition}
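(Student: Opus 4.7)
The plan is to set up a correspondence between the minimal pseudo-monomials of $J_\CC$ and the maximal motifs of $\,^c\CC$ via the map $\am\mapsto L_\am$. I would proceed in two steps: first characterize which Lagrange polynomials belong to $J_\CC$, then translate the divisibility-minimality of pseudo-monomials into the $\le$-order on motifs.

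The key lemma is that $L_\am\in J_\CC$ if and only if $\am\in\mathrm{Mot}(\,^c\CC)$. For the forward direction, the preceding proposition gives $J_\CC\subseteq J_\CC+\mathcal{B}=\mathcal{I}(\CC)$, so $L_\am$ vanishes on $\CC$; since $L_\am(\wm)=1$ precisely when $\wm\in V_\am$ and equals $0$ otherwise, this forces $V_\am\cap\CC=\emptyset$, i.e., $V_\am\subseteq \,^c\CC$. For the converse I would use the polynomial identity
\[L_\am \;=\; L_\am\cdot\prod_{k\,:\,a_k=\ast}\bigl(X_k+(1-X_k)\bigr) \;=\; \sum_{\wm\in V_\am}L_\wm,\]
which holds in $\F_2[X_1,\dots,X_n]$ simply by expanding the product; it relies only on $X_k+(1-X_k)=1$ and not on passing to a quotient by $\mathcal{B}$. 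If $V_\am\subseteq \,^c\CC$, each summand $L_\wm$ on the right is a defining generator of $J_\CC$, whence $L_\am\in J_\CC$.

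The second step uses unique factorization in $\F_2[X_1,\dots,X_n]$: the linear polynomials $X_1,\dots,X_n,1-X_1,\dots,1-X_n$ are pairwise distinct irreducibles, so every pseudo-monomial is uniquely of the form $L_\am$ for some $\am\in\M^n$ (with $\sigma=\{i:a_i=1\}$, $\tau=\{j:a_j=0\}$), and $L_\bm$ properly divides $L_\am$ exactly when $\bm>\am$ strictly in the motif order (that is, $\bm$ is obtained from $\am$ by turning at least one fixed coordinate into a $\ast$). Combining this with the key lemma, $L_\am$ is a minimal pseudo-monomial of $J_\CC$ precisely when $\am\in\mathrm{Mot}(\,^c\CC)$ and no $\bm>\am$ lies in $\mathrm{Mot}(\,^c\CC)$, which is exactly the definition of $\am\in\mathrm{MaxMot}(\,^c\CC)$.

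The only subtlety is the expansion identity used in the converse of the key lemma: one must see it as an honest equality of polynomials in $\F_2[X_1,\dots,X_n]$, not just modulo $\mathcal{B}$, so that it actually exhibits $L_\am$ as an $\F_2[X_1,\dots,X_n]$-combination of generators of $J_\CC$. Fortunately this follows by iterating $1=X_k+(1-X_k)$ across the star positions of $\am$, so it is a routine verification rather than a real obstacle.
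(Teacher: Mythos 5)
Your proof is correct. The paper does not prove this proposition itself but cites it as Lemma~5.7 of Curto et al.\ (\cite{civy}); your argument is essentially the standard one underlying that result. The two pillars of your proof both check out: the identity
\begin{equation*}
L_\am \;=\; L_\am\cdot\prod_{k\,:\,a_k=\ast}\bigl(X_k+(1-X_k)\bigr) \;=\; \sum_{\wm\in V_\am}L_\wm
\end{equation*}
is an honest polynomial identity in $\F_2[X_1,\dots,X_n]$ (not just modulo $\mathcal{B}$), so when $V_\am\subseteq\,^c\CC$ it exhibits $L_\am$ as an $\F_2$-linear combination of the defining generators of $J_\CC$; and unique factorization in $\F_2[X_1,\dots,X_n]$ does give the bijection $\am\leftrightarrow L_\am$ between motifs and pseudo-monomials, under which $L_\bm\mid L_\am$ if and only if $\am\le\bm$, translating degree-minimality among pseudo-monomials of $J_\CC$ into maximality of motifs in $\mathrm{Mot}(\,^c\CC)$. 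This is exactly what the definitions of $CF$ and $\mathrm{MaxMot}$ require.
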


\begin{remark}[{\cite[page 1594]{civy}}]
Note that it can happen that $J_\CC=(\{L_\am\;:\;\am\in M\})$, where $M$ is a proper subset of $\mathrm{MaxMot}(^c\CC)$. For example, for the neural code $\mathcal{C}=\{000, 001, 011, 111\}\subseteq \F_2^3$, we have $^c\CC=\{100, 010, 110, 101\}$, so that $\mathrm{MaxMot}(^c\CC)=\{10*, *10, 1\negthinspace*0\}$. Hence $CF(J_\CC)=\{L_{10*}, L_{*10}, L_{1*0}\}$. However, $J_\CC=(L_{10*}, L_{*10})$.
\end{remark}

\begin{definition}[{\cite[page 1594]{civy}}]
For a motif $\am\in \{0,1,*\}^n$ we define a {\it prime ideal} of $\am$, $\p_\am\subseteq \F_2[X_1,\dots, X_n]$, in the following way:
\begin{equation*}
\p_\am=(\{X_i\;:\;a_i=0\}\cup \{1-X_j\;:\;a_j=1\}).
\end{equation*}
If a prime ideal $\p$ in $\F_2[X_1, \dots, X_n]$ is equal to $\p_\am$ for some motif $\am$, we say that $\p$ is a {\it motivic prime ideal}.
\end{definition}

\begin{proposition}[{\cite[page 1594]{civy}}]
Let $\am, \bm\in \{0,1,*\}^n$ be two motifs of length $n$. We have:
\begin{align*}
V_\am \subseteq V_\bm & \Leftrightarrow \p_\bm\subseteq \p_\am,\\
\mathcal{I}(V_\am) & \,= \p_\am+\mathcal{B},\\
\mathcal{V}(\p_\am) & \,= V_\am.
\end{align*} 
\end{proposition}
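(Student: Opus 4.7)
The plan is to prove the three statements in the order $\mathcal{V}(\p_\am)=V_\am$, then $\mathcal{I}(V_\am)=\p_\am+\mathcal{B}$, then the equivalence, since each step feeds into the next.

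First I would establish $\mathcal{V}(\p_\am)=V_\am$ directly from the definitions. A word $\wm\in\F_2^n$ lies in $\mathcal{V}(\p_\am)$ exactly when it vanishes on each of the listed generators of $\p_\am$, i.e. when $w_i=0$ for every $i$ with $a_i=0$ and $w_j=1$ for every $j$ with $a_j=1$, with no constraint on $w_k$ when $a_k=\ast$. This is precisely the description of $V_\am$ given in the first Definition-and-basic-facts block, so the two sets coincide.

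Next, $\mathcal{I}(V_\am)=\p_\am+\mathcal{B}$ follows by combining the preceding equality with the Hilbert Nullstellensatz for $\F_2$ recalled earlier. Applying $\mathcal{I}(\mathcal{V}(I))=I+\mathcal{B}$ to the ideal $I=\p_\am$ yields $\mathcal{I}(\mathcal{V}(\p_\am))=\p_\am+\mathcal{B}$, and substituting $\mathcal{V}(\p_\am)=V_\am$ gives the desired identity.

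Finally, for the equivalence $V_\am\subseteq V_\bm\Leftrightarrow \p_\bm\subseteq\p_\am$, I would treat each direction separately. For ($\Rightarrow$), I use the previously stated criterion $V_\am\subseteq V_\bm\Leftrightarrow\am\le\bm$. The inequality $\am\le\bm$ says $b_i=0\Rightarrow a_i=0$ and $b_j=1\Rightarrow a_j=1$, hence every generator $X_i$ (with $b_i=0$) and every generator $1-X_j$ (with $b_j=1$) of $\p_\bm$ appears among the generators of $\p_\am$, so $\p_\bm\subseteq\p_\am$. For ($\Leftarrow$), I apply the order-reversing operator $\mathcal{V}$ and invoke the first part: from $\p_\bm\subseteq\p_\am$ we obtain $V_\am=\mathcal{V}(\p_\am)\subseteq\mathcal{V}(\p_\bm)=V_\bm$.

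No step poses a serious obstacle; everything reduces to unpacking the definitions of $\p_\am$ and $V_\am$ and invoking the two tools already established in the excerpt, namely the Nullstellensatz for $\F_2$ and the characterization $\am\le\bm\Leftrightarrow V_\am\subseteq V_\bm$. The only place that requires a little care is verifying the forward direction of the equivalence at the level of generators, but this is a purely combinatorial check on the two coordinate patterns.
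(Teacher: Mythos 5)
The paper does not actually supply a proof of this proposition; it merely cites it from Curto et al.\ (\cite{civy}), so there is no internal argument to compare against. Your derivation is correct and is the natural one from the tools already laid out in Section~1: you read off $\mathcal{V}(\p_\am)=V_\am$ from the generators of $\p_\am$, obtain $\mathcal{I}(V_\am)=\p_\am+\mathcal{B}$ by plugging $I=\p_\am$ into the $\F_2$-Nullstellensatz and substituting, and get the equivalence by combining the criterion $\am\le\bm\Leftrightarrow V_\am\subseteq V_\bm$ with the inclusion-reversing property of $\mathcal{V}$. The only step you leave tacit, that $\mathcal{V}$ reverses inclusions, is standard and immediate from its definition, so the argument is complete.
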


For an ideal $I$ in $\F_2[X_1,\dots, X_n]$ we denote by $\mathrm{Min}(I)$ the set of all minimal prime ideals of $I$.

\begin{proposition}[{\cite[Lemma 5.1, Lemma 5.3 and Corollary 5.5]{civy}}]\label{Min_max_motifs}
Let $\CC\subseteq \F_2^n$ be a neural code and $\am\in\M^n$ a motif. We have:
\begin{align*}
\am \in\mathrm{Mot}(\CC) & \Leftrightarrow \p_\am\supseteq J_\CC,\\
\am \in\mathrm{MaxMot}(\CC) & \Leftrightarrow \p_\am\in\mathrm{Min}(J_\CC).
\end{align*}
Moreover,
\begin{equation}\label{Min_J}
\mathrm{Min}(J_\CC)=\{\p_\am\;:\;\am \in\mathrm{MaxMot}(\CC)\}.
\end{equation}
\end{proposition}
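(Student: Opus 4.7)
The plan is to derive the first equivalence directly from the structure of $\p_\am$. Since $\p_\am$ is prime and minimally generated by the variables $X_i$ with $a_i=0$ and the polynomials $1-X_j$ with $a_j=1$, one checks via the isomorphism $\F_2[X_1,\dots,X_n]/\p_\am\cong \F_2[X_k\;:\;a_k=*]$ that $X_i\in\p_\am$ iff $a_i=0$ and $1-X_j\in\p_\am$ iff $a_j=1$. By primality, $L_\wm\in\p_\am$ iff at least one of its linear factors is, which happens exactly when some coordinate of $\wm$ disagrees with a non-$\ast$ coordinate of $\am$, i.e.\ when $\wm\notin V_\am$. Hence $\p_\am\supseteq J_\CC$ iff $L_\wm\in\p_\am$ for every $\wm\in{}^c\CC$ iff $V_\am\cap{}^c\CC=\emptyset$ iff $\am\in\mathrm{Mot}(\CC)$.

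For the second equivalence and the ``moreover'' statement, the key lemma I plan to establish is that every minimal prime of $J_\CC$ is motivic. Given $\p\in\mathrm{Min}(J_\CC)$, I define a motif $\bm$ by $b_i=0$ if $X_i\in\p$, $b_i=1$ if $1-X_i\in\p$, and $b_i=*$ otherwise; this is well-defined because no prime can contain both $X_i$ and $1-X_i$ (whose sum is $1$). By construction $\p_\bm\subseteq\p$. Moreover, for each $\wm\in{}^c\CC$, primality forces some factor of $L_\wm$ to lie in $\p$, and that factor is by construction also a generator of $\p_\bm$; therefore $J_\CC\subseteq\p_\bm\subseteq\p$, and minimality of $\p$ forces $\p=\p_\bm$.

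With these ingredients in place, the second equivalence follows from the order-reversing correspondence $\am\le\bm \Leftrightarrow \p_\bm\subseteq\p_\am$ supplied by the previous proposition, combined with the fact that the map $\am\mapsto\p_\am$ is injective (two motifs with the same non-$\ast$ pattern yield the same prime, and conversely): $\am\in\mathrm{MaxMot}(\CC)$ is exactly the condition that $\p_\am$ is minimal among primes of the form $\p_\bm$ with $\bm\in\mathrm{Mot}(\CC)$, and the lemma above lets us upgrade ``motivic primes containing $J_\CC$'' to ``all primes containing $J_\CC$''. Formula (\ref{Min_J}) then follows at once: the inclusion $\supseteq$ is the second equivalence, while $\subseteq$ is the lemma together with its reverse direction. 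The main obstacle is the lemma on motivic-ness of minimal primes; once it is secured, the rest is a transparent translation between the combinatorics of motifs and the algebra of motivic primes.
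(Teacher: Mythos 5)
Your argument is correct, and in fact it supplies a proof where the paper itself only cites Lemma 5.1, Lemma 5.3 and Corollary 5.5 of Curto et al.\ without reproving them. Your reduction of the first equivalence to the observation that, by primality of $\p_\am$, the Lagrange polynomial $L_\wm$ lies in $\p_\am$ exactly when one of its linear factors does, hence exactly when $\wm\notin V_\am$, is the standard route; your key lemma that every minimal prime of $J_\CC$ is motivic (build $\bm$ from the variables and co-variables $\p$ actually contains, observe $\p_\bm\subseteq\p$, then push $J_\CC\subseteq\p_\bm$ through by primality and invoke minimality) is precisely the content of the cited Lemma 5.3. The only point worth tightening in a written-up version is the final step: you should make explicit that if $\p_\am$ is minimal among motivic primes containing $J_\CC$, then any $\p\in\mathrm{Min}(J_\CC)$ with $\p\subseteq\p_\am$ is itself motivic by your lemma, hence equal to $\p_\am$, so $\p_\am$ really is a minimal prime in the absolute sense. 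That one sentence closes the gap between ``minimal among motivic primes'' and ``minimal among all primes,'' which otherwise the reader has to supply. Aside from that cosmetic point, the proposal is sound and matches the source argument in spirit.
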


\begin{proposition}[{\cite[Corollary 5.5]{civy}}]
Let $\CC\subseteq \F_2^n$ be a non-empty neural code. Then
\begin{equation*}
J_\CC=\cap\,\{\p_\am\;:\;\am \in\mathrm{MaxMot}(\CC)\}
\end{equation*}
is the unique irredundant primary decomposition of $J_\CC$.
\end{proposition}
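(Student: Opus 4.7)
The plan is to combine Proposition~\ref{Min_max_motifs} with the fact that $J_\CC$ is a radical ideal of $\F_2[X_1,\dots,X_n]$. The inclusion $J_\CC\subseteq\bigcap_{\am\in\mathrm{MaxMot}(\CC)}\p_\am$ is immediate from the first equivalence of Proposition~\ref{Min_max_motifs}: since $\mathrm{MaxMot}(\CC)\subseteq\mathrm{Mot}(\CC)$, that equivalence gives $\p_\am\supseteq J_\CC$ for every $\am\in\mathrm{MaxMot}(\CC)$. For the reverse inclusion I would invoke the standard identity $\sqrt{J_\CC}=\bigcap_{\p\in\mathrm{Min}(J_\CC)}\p$ together with \eqref{Min_J} to obtain $\sqrt{J_\CC}=\bigcap_{\am\in\mathrm{MaxMot}(\CC)}\p_\am$; once $J_\CC$ is shown to be radical, equality follows.

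The main obstacle is therefore to show that $J_\CC$ is radical. Since $J_\CC=(\{L_\wm\;:\;\wm\in\,^c\CC\})$ is a pseudo-monomial ideal, it suffices to prove that every pseudo-monomial ideal in $\F_2[X_1,\dots,X_n]$ is radical. The cleanest route I know is through the polarization device around which this paper is built: pass to $\F_2[X_1,\dots,X_n,Y_1,\dots,Y_n]$ and replace each factor $(1-X_j)$ appearing in a generator of $J_\CC$ by $Y_j$. The resulting ideal is a genuine square-free monomial ideal, hence radical by the classical Stanley--Reisner theory. Transferring radicality back along the quotient by the regular sequence $\{X_j+Y_j-1\}$, which recovers $\F_2[X_1,\dots,X_n]$, then yields radicality of $J_\CC$.

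With the equality $J_\CC=\bigcap_{\am\in\mathrm{MaxMot}(\CC)}\p_\am$ in hand, the primary-decomposition claims are routine. Each $\p_\am$ is prime, hence primary, so this is a primary decomposition. It is irredundant because, again by Proposition~\ref{Min_max_motifs}, the $\p_\am$ with $\am\in\mathrm{MaxMot}(\CC)$ are the (distinct) minimal primes of $J_\CC$, which are pairwise incomparable; removing any one of them strictly enlarges the intersection. Uniqueness is a general consequence of being radical: the associated primes of a radical ideal are exactly its minimal primes (no embedded components can occur), and the primary component at each minimal prime $\p$ is simply $\p$ itself.
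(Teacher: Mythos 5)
The paper does not prove this proposition; it simply cites it as \cite[Corollary~5.5]{civy}. Your reduction of the statement to radicality of $J_\CC$ is the right frame: once $J_\CC$ is radical, $J_\CC=\sqrt{J_\CC}=\bigcap_{\p\in\mathrm{Min}(J_\CC)}\p=\bigcap_{\am\in\mathrm{MaxMot}(\CC)}\p_\am$ by (\ref{Min_J}), and the primary-decomposition claims then follow exactly as you say. The difficulty is that you have pushed all of the content into the one step where you assert that radicality transfers back from $J_\CC^p$ to $J_\CC$, and that step, as written, is not valid.

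Concretely, passing from a reduced ring to its quotient by a regular sequence does \emph{not} preserve reducedness in general. The ring $A=k[x,y]/(xy)$ is reduced; the element $\ell=x-y$ is a nonzerodivisor on $A$; yet $A/(\ell)\cong k[x]/(x^2)$ is not reduced. So the bare facts that $\F_2[X_1,\dots,X_n,Y_1,\dots,Y_n]/J_\CC^p$ is reduced (via Stanley--Reisner theory) and that $(X_1+Y_1-1,\dots,X_n+Y_n-1)$ is a regular sequence on it do not, by themselves, let you conclude that $\F_2[X_1,\dots,X_n]/J_\CC$ is reduced. What one actually needs is the finer structural fact, proved in \cite{gjs} (and implicit in \cite{civy}), that the minimal primes of $J_\CC^p$ are precisely the polarizations $\p_\am^p$ (equivalently $\p_{\am^p}$) of the minimal primes $\p_\am$ of $J_\CC$, and that each of these depolarizes back to $\p_\am$ along the same linear forms; one then checks directly that the intersection $\bigcap\p_{\am^p}$ maps onto $\bigcap\p_\am$ in the quotient. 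That argument uses special features of pseudo-monomial primes (each $\p_\am$ or $\p_{\am^p}$ is generated by variables and their complements), not just regularity of the sequence. Until you supply that extra structure, your proof has a genuine gap at its central step. Alternatively, one can avoid polarization entirely and show $\mathcal{B}\subseteq J_\CC$ directly (so that $J_\CC=J_\CC+\mathcal{B}=\sqrt{J_\CC}$), which is closer to the route taken in \cite{civy}.

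Apart from this gap, the framing is sound, and in particular your closing observations about irredundancy and uniqueness (the $\p_\am$ being pairwise incomparable, and a radical ideal having no embedded primes) are correct once radicality is secured.
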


 The notions of polarization of pseudo-monomials and pseudo-monomial ideals were introduced in 2017 in the paper \cite{gjs} by G\"unt\"urk\"un et al.

\begin{definition}[{\cite[page 6]{gjs}}]\label{pol_pm}
For a pseudo-monomial 
\[f=\prod_{i\in\sigma}X_i\,\prod_{j\in\tau}(1-X_j)\in\F_2[X_1,\dots, X_n],\] 
where $\sigma,\tau$ are two disjoint subsets of $[n]$, we define its {\it polarization} $f^p$ to be the square-free monomial
\[f^p=\prod_{i\in\sigma}X_i\,\prod_{j\in\tau}Y_j\in\F_2[X_1,\dots, X_n, Y_1,\dots, Y_n].\]
\end{definition} 

\begin{proposition}[{\cite[Lemma 3.1]{gjs}}]\label{Gunt_lemma}
Let $f, g\in \F_2[X_1,\dots, X_n]$ be two pseudomonomials. Then
\[f\mid g \; \Leftrightarrow \; f^p\mid g^p.\] 
\end{proposition}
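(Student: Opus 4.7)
The plan is to reduce each of the divisibilities $f\mid g$ and $f^p\mid g^p$ to the same purely combinatorial condition on index sets, after which the equivalence is automatic. Write
\[
f=\prod_{i\in\sigma}X_i\prod_{j\in\tau}(1-X_j),\qquad g=\prod_{i\in\sigma'}X_i\prod_{j\in\tau'}(1-X_j),
\]
with $\sigma\cap\tau=\emptyset$ and $\sigma'\cap\tau'=\emptyset$, so that
\[
f^p=\prod_{i\in\sigma}X_i\prod_{j\in\tau}Y_j,\qquad g^p=\prod_{i\in\sigma'}X_i\prod_{j\in\tau'}Y_j.
\]
I claim each of $f\mid g$ and $f^p\mid g^p$ is equivalent to the condition $\sigma\subseteq\sigma'$ and $\tau\subseteq\tau'$; the proposition follows at once.

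The key observation is that $\F_2[X_1,\dots,X_n]$ is a UFD in which the $2n$ elements $X_1,\dots,X_n,\,1-X_1,\dots,1-X_n$ are pairwise non-associate primes. Irreducibility of each $X_i$ and each $1-X_j$ is immediate (degree one in one variable). Non-associateness follows because the units of $\F_2[X_1,\dots,X_n]$ are just $\{1\}$ (since $\F_2^{\times}=\{1\}$), so distinct irreducibles are non-associate. Because $\sigma\cap\tau=\emptyset$, the pseudo-monomial $f$ is already a product of distinct primes from this list, each with multiplicity one, and similarly for $g$. By unique factorization, $f\mid g$ holds if and only if the set of primes appearing in $f$ is contained in that of $g$, which is precisely $\sigma\subseteq\sigma'$ and $\tau\subseteq\tau'$.

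The identical UFD argument in the larger polynomial ring $\F_2[X_1,\dots,X_n,Y_1,\dots,Y_n]$ shows that $f^p\mid g^p$ is equivalent to the same condition $\sigma\subseteq\sigma'$ and $\tau\subseteq\tau'$, since $X_1,\dots,X_n,Y_1,\dots,Y_n$ are pairwise non-associate primes and $f^p,g^p$ are square-free monomials in them. Matching the two equivalences yields $f\mid g\Leftrightarrow f^p\mid g^p$. There is no real obstacle in the argument; the only point requiring a brief verification is the non-associateness of $X_i$ and $1-X_i$, which is handled by the trivial unit group of $\F_2[X_1,\dots,X_n]$.
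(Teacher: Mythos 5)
Your proof is correct. The UFD argument is clean: $X_1,\dots,X_n,\,1-X_1,\dots,1-X_n$ are pairwise non-associate irreducibles in $\F_2[X_1,\dots,X_n]$ (distinct because $X_i+X_i=0\neq 1$ and the unit group is trivial), so a pseudo-monomial is a product of distinct primes each with multiplicity one, and divisibility reduces exactly to the index-set containments $\sigma\subseteq\sigma'$ and $\tau\subseteq\tau'$; the same reasoning in $\F_2[X_1,\dots,X_n,Y_1,\dots,Y_n]$ gives the identical characterization for $f^p\mid g^p$.

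One caveat on comparison: the paper itself does not prove this proposition but merely cites \cite[Lemma 3.1]{gjs}, so there is no in-paper argument to measure yours against. Your reduction to the combinatorial condition $\sigma\subseteq\sigma'$, $\tau\subseteq\tau'$ is the natural one and is almost certainly the content of the cited lemma; your proof is self-contained and would serve as a replacement for the citation if one wished to make the paper independent of \cite{gjs} on this point.
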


\begin{definition}[{\cite[Definition 3.3]{gjs}}]\label{pol_ideal}
Let $J$ be a pseudo-monomial ideal in $\F_2[X_1,\dots, X_n]$ and let $CF(J)=\{f_1, \dots, f_l\}$ be its canonical form. We define the {\it polarization of $J$} to be the ideal
\[J^p=(f_1^p, \dots, f_l^p) \subseteq \F_2[X_1, \dots, X_n, Y_1, \dots, Y_n].\]
\end{definition}

Note that then 
\[CF(J^p)=\{f_1^p, \dots, f_l^p\}.\]

\begin{remark}[about definitions and notation related to $\F_2^{2n}$ vs those related to $\F_2^n$]\label{def_n_2n}
Square-free monomial ideals are the ideals generated by square-free monomials and they are easier to deal with than the pseudo-monomial ideals. The previous two definitions show that, in order to get some conclusions about the pseudo-monomial ideals in $n$ variables $X_1,\dots, X_n$,  we can consider some related square-free monomial ideals in $2n$ variables, which, however, are not denoted by $X_1, \dots, X_{2n}$, but by $X_1,\dots, X_n, Y_1, \dots, Y_n$. Because of this difference in the notation for variables, we should be aware that, for example, a {\it pseudomonomial} in $\F_2[X_1,\dots, X_n, Y_1,\dots, Y_n]$ is a polynomial of the form
\[f=\prod_{i\in\sigma}X_i\,\prod_{j\in\tau}(1-X_j)\prod_{k\in\mu}Y_k\,\prod_{l\in\nu}(1-Y_l),\]
where $\sigma, \tau, \mu, \nu\subseteq [n]$,\, $\sigma\cap \tau=\emptyset$,\, $\mu\cap\nu=\emptyset$. Similarly, we have, for example, that for a motif $\am=b_1\dots b_n\,c_1\dots c_n\in \mathrm{Mot}(2n)$, the {\it Lagrange polynomial} $L_\am$ of $\am$ and the {\it prime ideal} $\p_\am$ of $\am$ are respectively given in the following way:
\begin{equation}\label{LP_2n}
L_\am=\prod_{a_i=1}X_i\,\prod_{a_j=0}(1-X_j)\prod_{b_i=1}Y_i\,\prod_{b_j=0}(1-Y_j),
\end{equation}
\begin{equation}\label{p_a_2n}
\p_\am=(\{X_i \,:\,b_i=0\} \cup \{1-X_j\,:\, b_j=1\} \cup \{Y_i \,:\,c_i=0\} \cup \{1-Y_j \,:\,c_j=1\}).
\end{equation}
So the definitions of these notions with respect to $\F_2^{2n}$ are the same as the ones with respect to $\F_2^n$, we just need to take into account the notation for the variables. This works for other notions as well (like, for example, {\it minimal pseudo-monomials} in an ideal, the {\it neural ideal} of a code, the {\it canonical form} of a pseudo-monomial ideal, etc.), while some notions (like, for example, {\it minimal primes }of an ideal, etc.) can be given in the form that does not depend on the notation for the variables. 

From now on we have the following convention:  if the length of motifs and codes is denoted by $n$, then the associated rings and ideals will always be in $n$ variables $X_1, \dots, X_n$, while for the length denoted by $2n$ the associated rings and ideals will always be in $2n$ variables $X_1,\dots, X_n, Y_1,\dots, Y_n$. For lengths given by concrete numbers it will always be clear from the context if the number is $n$ or $2n$.
\end{remark}

\section{Definitions of the polarizations of motifs and codes}\label{definitions}

We would like to define the motif $\am^p$ which is the polarization of a motif $\am=a_1\dots a_n\in\M^n$. Since for the motifs from $\M^n$ we have that the Lagrange polynomials and the prime ideals of motifs are in $n$ variables, and the polarizations of those Lagrange polynomials and prime ideals of motifs are in $2n$ variables, it is natural to try to define $\am^p$ to be an element of $\M^{2n}$. After defining the polarization of the motifs, we would define the polarization of a neural code $\CC\subseteq \F_2^n$ in the following way:
\begin{equation*}
\CC^p=\cup\,\{V_{\am^p}\;:\;\am\in\mathrm{MaxMot}(\CC)\}\subseteq \F_2^{2n}.
\end{equation*}

\noindent
This would imply that 
\begin{equation}\label{guiding_formula_1}
\mathrm{MaxMot}^p(\CC)=\mathrm{MaxMot}(\CC^p).
\end{equation} 
Here, and in the rest of the paper, we use the notation 
\begin{equation*}
M^p=\{\am^p\;:\;\am\in M\}
\end{equation*}
for any $M \subseteq \M^n$. We would also like that the formula 
\begin{equation}\label{guiding_formula_2}
\mathrm{Min}(J_{\CC^p})=\mathrm{Min}^p(J_\CC)
\end{equation}
holds. 
We will first try to find the formula for $\am^p$ in an example. So we will assume that the relations (\ref{guiding_formula_1}), and (\ref{guiding_formula_2}) hold. In addition to that, we will be using the relation (\ref{Min_J}):
\begin{equation*}
\mathrm{Min}(J_\CC)=\{\p_\am\;:\;\am \in\mathrm{MaxMot}(\CC)\}.
\end{equation*}

\begin{example}\label{example1}
Suppose that the relations (\ref{Min_J}), (\ref{guiding_formula_1}) and (\ref{guiding_formula_2}) hold. Let $\CC=\{10\}\subseteq \F_2^2$. Then
\begin{equation*}
\text{MaxMot}(\CC)=\{10\}.
\end{equation*}
By (\ref{guiding_formula_2}) we have 
\[\mathrm{MaxMot}(\CC^p)=\{10^p\}.\]
By (\ref{Min_J}),
\[\mathrm{Min}(J_\CC)=\{(1-X_1, X_2)\}=\p_{10},\]
hence by (\ref{guiding_formula_2}),
\[\mathrm{Min}(J_{\CC^p})=\{(1-X_1, X_2)^p\}=\{X_2, Y_1\}=\p_{*00*}.\]
Hence by (\ref{Min_J}),
\[\mathrm{MaxMot}(\CC^p)=\{*00*\}=\mathrm{MaxMot}^p(\CC)=\{10^p\}.\]
Thus
\begin{equation}\label{polar_formula_1}
10^p = *00\!*.
\end{equation}
Now $\DC =\,^c\CC=\{00, 01, 11\}$, so that
\[\mathrm{MaxMot}(\DC)=\{0*, *1\}.\]
Hence by (\ref{guiding_formula_1}),
\[\mathrm{MaxMot}(\DC^p)=\{0*^p, *1^p\}.\]
By (\ref{Min_J}),
\[\mathrm{Min}(J_\DC)=\{(X_1), (1-X_2)\},\]
hence by (\ref{guiding_formula_2}),
\[\mathrm{Min}(J_{\DC^p})=\{(X_1), (1-X_2)\}^p=\{(X_1), (Y_2)\}=\{\p_{0***}, \p_{***0}\}.\]
Hence by (\ref{Min_J}),
\[\mathrm{MaxMot}(\DC^p)=\{0\!*\!**, \, *\!*\!*0\}=\mathrm{MaxMot}^p(\DC)=\{0*^p, *1^p\}.\]
Hence we have either
\begin{equation}\label{pol_1}
0*^p=0\!*\!**, \quad *1^p=*\!*\!*0,
\end{equation}
or 
\begin{equation}\label{pol_2}
*1^p=*\!*\!*0, \quad 0*^p=0\!*\!**.
\end{equation}

\medskip\noindent
Now if we have
\begin{equation*}
(a_1a_2)^p=b_1b_2|c_1c_2,
\end{equation*}
then the formula (\ref{polar_formula_1}) would suggest that
\begin{align*}
a_1=1 &\Rightarrow b_1=*, \, c_1=0,\\
a_2=0 &\Rightarrow b_2=0, \, c_2=*,
\end{align*}
and the formula (\ref{pol_1}) would suggest that
\begin{align*}
a_1=0&\Rightarrow b_1=0, \,\, c_1=*,\\
a_2=* &\Rightarrow b_2=*, \,\, c_2=*.
\end{align*}

This would suggest that if
\[(a_1\dots a_n)^p=b_1\dots b_n|c_1\dots c_n,\]
then 
\begin{align*}
a_i=0 &\Rightarrow b_i=0, \,\,c_i=*,\\
a_i=1 &\Rightarrow b_i=*, \,\,c_i=0,\\
a_i=* &\Rightarrow b_i=*, \,\,c_i=*.
\end{align*}
(This, in particular, suggests that the formulas (\ref{pol_1}) are the correct way to polarize and not the formulas (\ref{pol_2}).)
\end{example}

\medskip
Having completed this example, based on the reasoning in it, we introduce the following definition.
\begin{definition}\label{pol_motif}
Let $\am=a_1\dots a_n\in\M^n$. We define its {\it polarization}
\begin{equation*}
\am^p=b_1\dots b_n|c_1\dots c_n
\end{equation*}
in the following way:
\begin{align*}
\text{if } a_i=0 \text{ in } \am,& \text{ then } b_i=0, \;c_i=* \text{ in } \am^p;\\
\text{if } a_i=1 \text{ in } \am,& \text{ then } b_i=*, \;c_i=0 \text{ in } \am^p;\\
\text{if } a_i=* \text{ in } \am,& \text{ then } b_i=*, \;c_i=* \text{ in } \am^p.
\end{align*}
\end{definition}

\medskip
Schematically:
\begin{align}\label{formula13}
\dots 0 \dots \quad &\mapsto \quad \dots 0 \dots | \dots * \dots\notag\\
\dots 1 \dots \quad &\mapsto \quad \dots * \dots | \dots 0 \dots\\
\dots * \dots \quad &\mapsto \quad \dots * \dots | \dots * \dots\notag
\end{align}

\medskip
We now define the polarization of a code.
\begin{definition}\label{pol_code}
For any code $\CC\subseteq \F_2^n$ we define its polarization $\CC^p\subseteq \F_2^{2n}$ in the following way:
\[\CC^p=\cup\{V_{\am^p}\;|\;\am\in\text{MaxMot}(\CC)\}.\]
\end{definition}

\begin{example}\label{example2}
Let's now determine $\CC^p$ and $\DC^p$ for $\CC$ and $\DC=^c\!\CC$ from Example \ref{example1}:
\begin{align*}
\CC^p&=\{10\}^p\\
           &=V_{(10)^p}\\
           &=V_{*00*}\\
           &=\{0000, 1000, 0001, 1001\},
\end{align*}
\begin{align*}
\DC^p=&\{00,01,11\}^p\\
           =&V_{(0*)^p} \cup V_{(*1)^p}\\
           =&V_{0***}\cup V_{***0}\\
           =&\{0000, 0100, 0010, 0110, 0001, 0101,\\
              &\;\; 0011, 0111, 1000, 1100, 1010, 1110\}.
\end{align*}

Note that here $\CC^p\cap \DC^p=\{0000\}$ and $\CC^p\cup \DC^p=\F_2^4\setminus\{1111\}$. In general, $\CC^p\cap \DC^p$ can contain several words and the complement of $\CC^p\cup \DC^p$ in $\F_2^{2n}$ can, as well, contain several words. 
\end{example}

\begin{definition}\label{dep_motif}
We say that a motif $\bm\in\M^{2n}$ is a {\it polar motif} if there is a motif $\am\in\M^n$ such that $\bm=\am^p$. The motif $\am$ is unique and we then denote $\am=\bm^d$.
\end{definition}

Note that we have
\begin{align*}
\am^{pd}=\am &\text{\quad for every $\am\in\M^n$},\\
\bm^{dp}=\bm &\text{\quad for every polar motif $\bm\in\M^{2n}$}.
\end{align*}

\section{Properties of the polarization of motifs and codes}

\begin{proposition}\label{a_p_le_b_p}
Let $\am,\bm\in \M^n$. Then
\[\am\le\bm \; \Leftrightarrow \; \am^p\le\bm^p.\]
\end{proposition}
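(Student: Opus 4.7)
The plan is to reduce the equivalence to a componentwise statement, since both the order on $\M^n$ and on $\M^{2n}$ are componentwise, and the polarization map in Definition \ref{pol_motif} acts coordinate-by-coordinate. Concretely, writing $\am^p = b_1\dots b_n \,|\, c_1\dots c_n$ and $\bm^p = b_1'\dots b_n' \,|\, c_1'\dots c_n'$, we have $\am^p \le \bm^p$ iff for every $i\in[n]$ both $b_i \le b_i'$ and $c_i \le c_i'$ hold in $\M$, and $\am \le \bm$ iff $a_i \le a_i'$ for every $i$. Thus it suffices to prove the $n=1$ case: for $a, b \in \M$, the relation $a \le b$ is equivalent to $(p_1(a), p_2(a)) \le (p_1(b), p_2(b))$ componentwise, where the pair assignment is $0 \mapsto (0, *)$, $1 \mapsto (*, 0)$, $* \mapsto (*, *)$.

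The one-variable statement is a finite case check on the nine pairs $(a, b) \in \M^2$. The three diagonal cases $a = b$ are trivial. The cases $b = *$ satisfy $a \le b$ on the left, and the image of $*$ is the top $(*, *)$ of $\M^2$, so both sides hold. The symmetric cases $a = *, b \in \{0, 1\}$ fail on the left (since $*$ is maximal and not below a minimal element), and they fail on the right because the image $(*, *)$ of $a$ has a $*$-coordinate in the position where the image of $b$ has a $0$, and $* \not\le 0$. Finally, the cases $(0, 1)$ and $(1, 0)$ fail on the left (since $0$ and $1$ are incomparable), and the corresponding images $(0, *)$ and $(*, 0)$ are incomparable in $\M^2$ for the same reason in complementary coordinates.

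An alternative and perhaps cleaner proof uses the algebraic characterization $\am \le \bm \Leftrightarrow \p_\bm \subseteq \p_\am$ already recorded in the paper (via $V_\am \subseteq V_\bm$). Applying formula (\ref{p_a_2n}) to $\am^p$, which contains no $1$'s, one computes directly
\[\p_{\am^p} = \bigl(\{X_i : a_i = 0\} \cup \{Y_j : a_j = 1\}\bigr),\]
and similarly for $\p_{\bm^p}$. Since this ideal is generated by a subset of the distinct indeterminates $X_1,\dots,X_n,Y_1,\dots,Y_n$, the containment $\p_{\bm^p} \subseteq \p_{\am^p}$ reduces to the containment of generating subsets, which unpacks to the two conditions $b_i = 0 \Rightarrow a_i = 0$ and $b_j = 1 \Rightarrow a_j = 1$, i.e.\ exactly $\am \le \bm$.

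There is no real obstacle: the substantive content is only that polarization sends $*$ to the top $(*, *)$ of $\M^2$ and sends $0, 1$ to incomparable elements, so the order on $\M$ is transported faithfully to the image. I would present the proof via the direct componentwise case analysis, since it is elementary and self-contained, and possibly remark on the algebraic reformulation for conceptual clarity.
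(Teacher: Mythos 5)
Your primary proof (reduce to $n=1$ and enumerate the nine pairs $(a,b)\in\M^2$) is correct and is essentially the paper's own argument: the paper also argues coordinate-by-coordinate, using the equivalent formulation that $\am\le\bm$ means $b_i\ne *$ forces $a_i=b_i$, and checking the two directions directly rather than by explicit enumeration. Your alternative algebraic route via $\am\le\bm\Leftrightarrow\p_\bm\subseteq\p_\am$ together with the computation $\p_{\am^p}=(\{X_i:a_i=0\}\cup\{Y_j:a_j=1\})$ is also correct and is a genuinely different argument; it amounts to deriving formula (\ref{formula20}), $\p_{\am^p}=\p_\am^p$, which the paper records separately and later, and then reading off the equivalence from containment of the generating variable sets. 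The case analysis is more elementary and self-contained; the prime-ideal route is conceptually cleaner and makes the monotonicity transparent as a statement about which variables generate each prime.
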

\begin{proof}
Suppose $\am\le\bm$. Let $i\in [n]$. If $(\bm^p)_i=0$, then $(\bm^p)_{n+i}=*$ and $b_i=0$, hence $a_i=0$, hence $(\am^p)_i=0$. If $(\bm^p)_{n+i}=0$, then $(\bm^p)_i=*$ and $b_i=1$, hence $a_i=1$, hence $(\am^p)_{n+i}=0$. Thus $\am^p\le\bm^p$.

Suppose $\am^p\le\bm^p$. Let $i\in [n]$. If $b_i=0$, then $(\bm^p)_i=0$, hence $(\am^p)_i=0$, hence $a_i=0$. If $b_i=1$, then $(\bm^p)_{n+i}=0$, hence $(\am^p)_{n+i}=0$, hence $a_i=1$. Thus $\am\le\bm$.   
\end{proof}

\begin{corollary}\label{mot_p_C_subset_mot_C_p}
For any code $\CC\subseteq \F_2^n$ we have
\[\mathrm{Mot}^p(\CC) \; \subseteq \; \mathrm{Mot}(\CC^p).\]
\end{corollary}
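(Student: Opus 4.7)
The plan is to reduce the claim to the case of a maximal motif and then use Proposition~\ref{a_p_le_b_p} to pass to the polarizations. Concretely, suppose $\am \in \mathrm{Mot}(\CC)$, so that we want to show $\am^p \in \mathrm{Mot}(\CC^p)$, i.e.\ $V_{\am^p} \subseteq \CC^p$.

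First I would invoke the basic fact recorded in the opening definition that every motif of $\CC$ lies below some maximal motif: pick $\bm \in \mathrm{MaxMot}(\CC)$ with $\am \le \bm$. Then by Proposition~\ref{a_p_le_b_p} we get $\am^p \le \bm^p$, which (using the equivalence $\am \le \bm \Leftrightarrow V_\am \subseteq V_\bm$ stated in the opening definition, applied now in $\M^{2n}$) gives $V_{\am^p} \subseteq V_{\bm^p}$.

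Next I would unfold the definition of $\CC^p$: since $\bm \in \mathrm{MaxMot}(\CC)$, Definition~\ref{pol_code} directly yields $V_{\bm^p} \subseteq \CC^p$. Chaining the two containments produces $V_{\am^p} \subseteq V_{\bm^p} \subseteq \CC^p$, so $\am^p \in \mathrm{Mot}(\CC^p)$, finishing the argument.

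I do not anticipate any real obstacle: the statement is essentially a monotonicity fact, and the only nontrivial ingredient is Proposition~\ref{a_p_le_b_p}, which has already been proved. The one thing worth being a little careful about is that $\CC^p$ is defined in terms of the \emph{maximal} motifs of $\CC$, not all motifs, which is precisely why the reduction step through a maximal motif above $\am$ is necessary; without it the containment $V_{\am^p} \subseteq \CC^p$ would not be immediate from the definition.
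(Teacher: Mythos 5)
Your argument is correct and coincides with the paper's proof: pass from $\am$ to a maximal motif $\bm\ge\am$, apply Proposition~\ref{a_p_le_b_p} to get $\am^p\le\bm^p$, and use the definition of $\CC^p$ to conclude $\am^p\in\mathrm{Mot}(\CC^p)$. The only difference is that you explicitly unwind $\am^p\le\bm^p$ into $V_{\am^p}\subseteq V_{\bm^p}$, which the paper leaves implicit.
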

\begin{proof}
Let $\am\in\mathrm{Mot}(\CC)$ and let $\bm\in\mathrm{MaxMot}(\CC)$ such that $\am\le\bm$. By Proposition \ref{a_p_le_b_p}, $\am^p\le\bm^p$. Since (by the definition of $\CC^p$) $\bm^p\in\mathrm{Mot}(\CC^p)$, we have $\am^p\in\mathrm{Mot}(\CC^p)$.
\end{proof}

\begin{theorem}\label{maximal_motifs}
For any code $\CC\subseteq \F_2^n$ we have
\[\mathrm{MaxMot}(\CC^p) =\mathrm{MaxMot}^p(\CC).\]
\end{theorem}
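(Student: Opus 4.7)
The plan is to first strengthen Corollary \ref{mot_p_C_subset_mot_C_p} to the biconditional
\[
\am\in\mathrm{Mot}(\CC)\;\Longleftrightarrow\;\am^p\in\mathrm{Mot}(\CC^p)
\]
valid for every $\am\in\M^n$, and then derive both inclusions of the theorem from it. The forward direction is exactly Corollary \ref{mot_p_C_subset_mot_C_p}. For the converse, given $\am^p\in\mathrm{Mot}(\CC^p)$ and any $\wm\in V_\am$, I form the word $\wm|\overline{\wm}\in\F_2^{2n}$ (where $\overline{\wm}$ is the bit-complement of $\wm$) and verify from Definition \ref{pol_motif} that $\wm|\overline{\wm}\in V_{\am^p}\subseteq\CC^p$. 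Expanding $\CC^p=\cup V_{(\am')^p}$ over $\am'\in\mathrm{MaxMot}(\CC)$ puts $\wm|\overline{\wm}$ in some $V_{(\am')^p}$, and the defining constraints of that polar variety read off as $\wm\in V_{\am'}\subseteq\CC$.

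For the inclusion $\mathrm{MaxMot}^p(\CC)\subseteq\mathrm{MaxMot}(\CC^p)$, I take $\am\in\mathrm{MaxMot}(\CC)$ and suppose for contradiction that $\am^p<\bm\in\mathrm{Mot}(\CC^p)$ strictly. Since $\am^p$ uses only the entries $0$ and $*$, the strict increase must occur at some coordinate $k$ with $(\am^p)_k=0$ and $\bm_k=*$; assume $k\le n$, so $a_k=0$. The critical observation is that $a_k=0$ forces $(\am^p)_{n+k}=*$, and since $*$ is the top element of $\M$, the inequality $\am^p\le\bm$ then also forces $\bm_{n+k}=*$. Promoting $a_k$ to $*$ in $\am$ yields a motif $\am'>\am$ whose polarization $(\am')^p$ carries $(*,*)$ at the pair of coordinates $(k,n+k)$ and otherwise agrees with $\am^p$, so $(\am')^p\le\bm$; hence $V_{(\am')^p}\subseteq V_\bm\subseteq\CC^p$, and the biconditional delivers $\am'\in\mathrm{Mot}(\CC)$, contradicting the maximality of $\am$. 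The case $k>n$ is symmetric, using $a_j=1\Rightarrow(\am^p)_j=*$.

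For the reverse inclusion $\mathrm{MaxMot}(\CC^p)\subseteq\mathrm{MaxMot}^p(\CC)$, given $\bm\in\mathrm{MaxMot}(\CC^p)$ I will exhibit an explicit $\am\in\mathrm{MaxMot}(\CC)$ with $\bm=\am^p$. Pick the ``extreme'' point $w_0=x|y\in V_\bm$ by setting $x_i=b_i$ if $b_i\ne *$ and $x_i=1$ otherwise, and symmetrically for $y_i$ using $c_i$. Since $w_0\in\CC^p$, there exists $\am\in\mathrm{MaxMot}(\CC)$ with $w_0\in V_{\am^p}$. The construction guarantees $x_i=0\Leftrightarrow b_i=0$ and $y_i=0\Leftrightarrow c_i=0$, so the polar-variety conditions $a_i=0\Rightarrow x_i=0$ and $a_i=1\Rightarrow y_i=0$ translate into the implications $a_i=0\Rightarrow b_i=0$ and $a_i=1\Rightarrow c_i=0$, which together are exactly the assertion $\bm\le\am^p$. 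The biconditional then gives $\am^p\in\mathrm{Mot}(\CC^p)$, and the maximality of $\bm$ forces $\bm=\am^p$. The main obstacle is precisely this last verification: finding a \emph{single} $w_0$ whose membership in one polar variety produces a uniform bound $\bm\le\am^p$ rather than merely pointwise information, and the ``as-$1$-as-$\bm$-permits'' choice of $w_0$ is exactly what makes the constraints on $\am$ tight enough to succeed.
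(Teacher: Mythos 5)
Your proof is correct, and it reorganizes the argument around a tool the paper does not isolate: the biconditional $\am\in\mathrm{Mot}(\CC)\Leftrightarrow\am^p\in\mathrm{Mot}(\CC^p)$, whose nontrivial direction you obtain cleanly via the witness $\wm\mapsto\wm|\overline{\wm}$. The paper instead proceeds through two preliminary structural claims about the elements of $\mathrm{MaxMot}(\CC^p)$ (that they lie in $\{0,*\}^{2n}$, and that no paired coordinates $(i,n+i)$ are both $0$), which are used to constrain the extreme witness before it is built; your construction of $w_0$ (set each coordinate to the imposed value when $\bm$ is non-starred, else to $1$) sidesteps both claims and works directly, and the resulting check $\bm\le\am^p$ is tighter because it does not rely on knowing $\bm$ is already polar-shaped. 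Your extreme-witness step is therefore essentially the paper's Claim~3 with the scaffolding removed. For the other inclusion, the paper reduces to its Claim~3 together with Proposition~\ref{a_p_le_b_p}, whereas you give a direct promotion argument: locate a coordinate $k$ with $(\am^p)_k=0<\bm_k$, observe the paired coordinate $n+k$ of $\am^p$ must be $*$ (and hence so is $\bm_{n+k}$), promote $\am$ at coordinate $k$, land $(\am')^p\le\bm$, and invoke the biconditional to contradict maximality. Both directions thus lean on the biconditional, which gives your proof a more uniform structure than the paper's four-claim decomposition, at the modest cost of not producing along the way the explicit description of $\mathrm{MaxMot}(\CC^p)$ as polar, zero-disjoint motifs that the paper's Claims~1 and~2 record.
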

\begin{proof}
Claim 1. $\mathrm{MaxMot}(\CC^p)\subseteq \{0,*\}^{2n}$.

\noindent
Proof of Claim 1. Suppose to the contrary, i.e., that $\bm\in\mathrm{MaxMot}(\CC^p)$ has a component $b_\alpha=1$ for some $\alpha\in [2n]$. Let  $\wm\in V_\bm$. Then $w_\alpha=1$. We have $\wm\in V_{\am^p}$ for some $\am\in\mathrm{MaxMot}(\CC)$. Then $(\am^p)_\alpha=*$, hence the word $\wm'$ obtained by replacing $w_\alpha$ in $\wm$ by $0$ is also in $V_{\am^p}$, hence in $\CC$. Hence the motif $\bm'$ obtained by replacing $b_\alpha$ by $*$ is also a motif of $\CC^p$, contradicting to the maximality of $\bm$. Claim 1 is proved.

\smallskip
Claim 2. Let $\bm\in\mathrm{MaxMot}(\CC^p)$. Then there is no $i\in [n]$ such that $b_i=b_{n+i}=0$.

\noindent
Proof of Claim 2. Suppose to the contrary. Let 
\begin{align*}
A &= \{j\in [n] \;:\; b_j=b_{n+j}=*\},\\
B &= \{j\in [n] \;:\; b_j=0, \, b_{n+j}=*\},\\
C &= \{j\in [n] \;:\; b_j=*, \, b_{n+j}=0\},\\
D &= \{j\in [n] \;:\; b_j=b_{n+j}=0\}.
\end{align*}
Then the sets $A,B,C,D$ form a partition of $[n]$ and $i\in D$. Let $\wm\in V_\bm$ be defined in the following way:
\begin{align*}
(\forall\, j\in A) \; & w_j=w_{n+j}=1;\\
(\forall\, j\in B) \; & w_j=0, \, w_{n+j}=1;\\
(\forall\, j\in C) \; & w_j=1, \, w_{n+j}=0;\\
(\forall\, j\in D) \; & w_j=w_{n+j}=0.
\end{align*}
Since $\wm\in \CC^p$, there is an $\am\in\mathrm{MaxMot}(\CC)$ such that $\wm\in V_{\am^p}$. Since $\am^p$ is a polar  motif, we have: 
\begin{align*}
(\forall\, j\in A) \; & (\am^p)_j=(\am^p)_{n+j}=*;\\
(\forall\, j\in B) \; & (\am^p)_j=0 \text{ or } *, \, (\am^p)_{n+j}=*;\\
(\forall\, j\in C) \; & (\am^p)_j=*, \, (\am^p)_{n+j}=0 \text{ or } *;\\
(\forall\, j\in D) \; & \text{at least one of $(\am^p)_j, \, (\am^p)_{n+j}$ is $*$, the other one is $0$ or $*$}.
\end{align*}
Since $D$ contains at least one element, these relations imply $\am^p>\bm$, contradicting to the maximality of $\bm$. Claim 2 is proved.

\smallskip
Claim 3. $\mathrm{MaxMot}(\CC^p)\subseteq \mathrm{MaxMot}^p(\CC)$.

\noindent
Proof of Claim 3. Let $\bm\in \mathrm{MaxMot}(\CC^p)$. By the claims 1 and 2, for each $i\in [n]$ we have one the following three cases: $b_i=b_{n+i}=*$, or, $b_i=0$, $b_{n+i}=*$, or, $b_i=*$, $b_{n+i}=0$. Let $\wm\in V_\bm$ be  a word defined in the following way: if $b_i=b_{n+i}=*$, then $w_i=w_{n+i}=1$; if $b_i=0$ and $b_{n+i}=*$, then $w_i=0$, $w_{n+i}=1$; if $b_i=*$ and $b_{n+i}=0$, then $w_i=1$, $w_{n+i}=0$. This word belongs to some $V_{\am^p}$, where $\am\in\mathrm{MaxMot}(\CC)$. Since $\am^p$ is a polar motif, we have the following cases: when $w_i=w_{n+i}=1$, then $(\am^p)_i=(\am^p)_{n+i}=*$; when $w_i=0$ and $w_{n+i}=1$, then $(\am^p)_i=0$, $(\am^p)_{n+i}=*$; when $w_i=1$ and $w_{n+i}=0$, then $(\am^p)_i=*$, $(\am^p)_{n+i}=0$. Hence $\am^p\ge \bm$. Since $\am^p\in\mathrm{Mot}(\CC)$ and $\bm\in\mathrm{MaxMot}(\CC)$, we have $\bm=\am^p$. Claim 3 is proved.

\smallskip
Claim 4. $\mathrm{MaxMot}^p(\CC)\subseteq \mathrm{MaxMot}(\CC^p)$.

\noindent
Proof of Claim 4. Suppose to the contrary. Let $\am\in  \mathrm{MaxMot}(\CC)$ such that $\am^p\notin  \mathrm{MaxMot}(\CC^p)$. By the definition of $\CC^p$, $\am^p\in\mathrm{Mot}(\CC^p)$, hence there is a $\bm\in\mathrm{MaxMot}(\CC^p)$ such that $\am^p<\bm$. By Claim 3, $\bm=\cm^p$ for some $\cm\in\mathrm{MaxMot}(\CC)$. Now by Proposition \ref{a_p_le_b_p}, from $\am^p<\cm^p$ we get $\am<\cm$, which is a contradiction since both $\am$ and $\cm$ are maximal motifs of $\CC$. Claim 4 is proved.

\smallskip
Now the statement of the theorem follows from Claim 3 and Claim 4.
\end{proof}

\begin{definition}\label{bar}
For a motif $\am=a_1\dots a_n\in\M^n$ we define $\overline{\am}$ to be the motif $\bm=b_1\dots b_{n}\in\M^n$ which satisfies the following condition\;
\[\text{for } i=1,2\dots, n, \text{ if } a_i\ne *, \text{ then } b_i=\overline{a_i}=1-a_i.\]
\end{definition}

\begin{example}
$\overline{1*01}\,=\,0*10$.
\end{example}

\medskip
Note that for two motifs $\am, \bm\in\M^n$ we have
\begin{equation}\label{p_d}
\bm=\overline{\overline{\am}^p}\; \Leftrightarrow \;\am=\overline{\overline{\bm}^d}.
\end{equation}

Also, if for any code $\CC\subseteq\F_2^n$ and any $M\subseteq \mathrm{Mot}(\CC)$ we denote $\overline{M}=\{\overline{\am}\;:\;\am\in M\}$, then
\begin{align}
\mathrm{Mot}(\overline{\CC})&=\overline{\mathrm{Mot}(\CC)},\label{mot_bar}\\
\mathrm{MaxMot}(\overline{\CC})&=\overline{\mathrm{MaxMot}(\CC)}.\label{max_mot_bar}
\end{align}

\begin{proposition}\label{pol_of_LP}
For any motif $\am\in\M^n$ we have
\[L_\am^p=L_{\overline{\overline{\am}^{\,p}}}.\]
\end{proposition}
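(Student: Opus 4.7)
The proof is a direct tracking calculation on a position-by-position basis, using the explicit definitions of $L_\am^p$, of $\overline{\,\cdot\,}$, of $\am^p$ for motifs, and of the Lagrange polynomial formula (\ref{LP_2n}) for motifs in $\M^{2n}$.

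First I would expand the left-hand side. Since $L_\am = \prod_{a_i=1} X_i \prod_{a_j=0}(1-X_j)$ is a pseudo-monomial with $\sigma = \{i : a_i = 1\}$ and $\tau = \{j : a_j = 0\}$, Definition \ref{pol_pm} gives immediately
\begin{equation*}
L_\am^p = \prod_{a_i=1} X_i \prod_{a_j=0} Y_j.
\end{equation*}

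Next I would compute the motif $\overline{\overline{\am}^p}$ by tracking each index $i \in [n]$ through the three operations (bar, polarize, bar). Writing $\overline{\overline{\am}^p} = d_1 \dots d_n\,|\,e_1\dots e_n$, a case analysis on $a_i$ (using Definition \ref{bar} and Definition \ref{pol_motif}, with the convention that the bar operation leaves $*$ positions unchanged, as illustrated by the example) yields:
\begin{align*}
a_i = 0 \; &\Rightarrow \; (\overline{\am})_i = 1 \; \Rightarrow \; (\overline{\am}^p)_i = *,\; (\overline{\am}^p)_{n+i} = 0 \; \Rightarrow \; d_i = *,\; e_i = 1,\\
a_i = 1 \; &\Rightarrow \; (\overline{\am})_i = 0 \; \Rightarrow \; (\overline{\am}^p)_i = 0,\; (\overline{\am}^p)_{n+i} = * \; \Rightarrow \; d_i = 1,\; e_i = *,\\
a_i = * \; &\Rightarrow \; (\overline{\am})_i = * \; \Rightarrow \; (\overline{\am}^p)_i = *,\; (\overline{\am}^p)_{n+i} = * \; \Rightarrow \; d_i = *,\; e_i = *.
\end{align*}
The key observation is that no position of $\overline{\overline{\am}^p}$ is ever equal to $0$: the only non-$*$ entries are the $1$s appearing at position $i$ when $a_i = 1$ and at position $n+i$ when $a_i = 0$.

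Finally I would apply the Lagrange polynomial formula (\ref{LP_2n}) to the motif $\overline{\overline{\am}^p} \in \M^{2n}$. Since there are no $0$ entries, both $\prod (1-X_j)$ and $\prod (1-Y_j)$ factors are empty, and we obtain
\begin{equation*}
L_{\overline{\overline{\am}^p}} \;=\; \prod_{d_i = 1} X_i \,\prod_{e_i = 1} Y_i \;=\; \prod_{a_i = 1} X_i \,\prod_{a_i = 0} Y_i,
\end{equation*}
which coincides with $L_\am^p$. This completes the verification.

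There is no real obstacle here: the proposition is essentially a bookkeeping identity that says ``polarization of Lagrange polynomials corresponds on the motif side to the composite operation $\am \mapsto \overline{\overline{\am}^p}$.'' The only point requiring care is the correct interpretation of the bar on $*$-entries (which must be read off from Definition \ref{bar} together with the example), and the correct application of formula (\ref{LP_2n}) with the right indexing of the $X$- and $Y$-variables.
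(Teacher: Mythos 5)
Your proof is correct and takes essentially the same approach as the paper, which simply states that the proposition ``follows from the definitions (\ref{pol_pm}), (\ref{LP_2n}), (\ref{pol_motif}), and (\ref{bar})'' without spelling out the computation. Your position-by-position case analysis is exactly the bookkeeping the paper leaves implicit, and your key observation that $\overline{\overline{\am}^p}$ has no $0$-entries is the right way to see why the $(1-X_j)$ and $(1-Y_j)$ factors vanish.
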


\begin{proof}
This follows from the definitions (\ref{pol_pm}), (\ref{LP_2n}), (\ref{pol_motif}), and (\ref{bar}).
\end{proof}

\begin{example}
Let $n=4$ and let $\am=11*0\in\text{Mot}(4)$. Then
\[L_{\am}=X_1X_2(1-X_4).\]
Hence
\[L_{\am}^p=X_1X_2Y_4.\]
On the other side we have:
\begin{align*}
\overline{\overline{\am}^p}&=\overline{\overline{11*0}^p}\\
                                              &=\overline{00*11^p}\\
                                              &=\overline{00*0|***0}\\
                                              &=11**|***1.
\end{align*}
Hence 
\[L_{\overline{\overline{\am}^p}}=X_1X_2Y_4=L_{\am}^p.\]
\end{example}

\begin{definition}
We say that two motifs $\am, \bm\in\M^n$ are {\it disjoint} if there is an $i\in [n]$ such that $a_i=\overline{b_i}$. 
\end{definition}

\begin{definition}
On the set $\M$ of motifs of length $1$ we introduce a commutative operation of {\it addition} in the following way:
\begin{align*}
0+0&=0,\\
0+1&=1,\\
1+1&=0,\\
0+*&=*,\\
1+*&=*,\\
*+*&=*.
\end{align*}
The first three lines represent the arithmetic in the field $\F_2$, while the last three lines represent the max-arithmetic. 
We then the addition in $\M^n$ by adding two motifs componentwise.
\end{definition}

It is easy to verify that with this operation and the partial order that we introduced before, $\M^n$ is a {\it partially ordered monoid}. 

\medskip
The importance of above definition lies in the fact that the sum $\am+\bm$ of two motifs $\am, \bm\in\M^n$ has an $1$-component if and only if the motifs $\am$ and $\bm$ are disjoint. Thus {\it we can recognize the disjointness of two motifs algebraically} by considering their sum.

\medskip
\begin{proposition}\label{disjoint_complement}
Let $\am\in\mathrm{Mot}(\CC)$ for some code $\CC\subseteq \F_2^n$ and $\bm\in\M^n$. Then $\bm\in\mathrm{Mot}(^c\CC)$ if and only if $\bm$ is disjoint with $\am$. Moreover, the maximal motifs of $^c\CC$ are the motifs $\bm$ that are maximal among the motifs from $\M^n$ that are disjoint with all the maximal motifs of $\CC$. 
\end{proposition}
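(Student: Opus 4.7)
I read the first assertion as: $\bm\in\mathrm{Mot}({}^c\CC)$ if and only if $\bm$ is disjoint with every $\am\in\mathrm{Mot}(\CC)$ (equivalently, with every $\am\in\mathrm{MaxMot}(\CC)$). My strategy is to reduce both parts of the proposition to a single geometric fact about motifs: for $\am,\bm\in\M^n$, one has $V_\am\cap V_\bm=\emptyset$ if and only if $\am$ and $\bm$ are disjoint. Once this is in hand, the rest is unwinding the definitions of $\mathrm{Mot}({}^c\CC)$, $V_\bm$, and maximality.

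First I would establish the geometric fact. The forward direction is immediate from the definition of disjointness: if $a_i=\overline{b_i}$ (so in particular $a_i,b_i\in\F_2$), then every $\wm\in V_\am$ has $w_i=a_i$ while every $\wm\in V_\bm$ has $w_i=b_i=\overline{a_i}$, so $V_\am\cap V_\bm=\emptyset$. For the converse, if $\am$ and $\bm$ are not disjoint, then for every $i\in[n]$ either $a_i=b_i$, or $a_i=*$, or $b_i=*$. Define $\wm\in\F_2^n$ by $w_i=a_i$ whenever $a_i\ne *$, $w_i=b_i$ whenever $a_i=*$ and $b_i\ne *$, and $w_i=0$ (say) when $a_i=b_i=*$. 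The condition on $(a_i,b_i)$ ensures the first two cases are consistent, and by construction $\wm\in V_\am\cap V_\bm$.

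Next I would deduce the main statement. By definition, $\bm\in\mathrm{Mot}({}^c\CC)$ means $V_\bm\subseteq{}^c\CC$, equivalently $V_\bm\cap\CC=\emptyset$. Using the decomposition $\CC=\bigcup\{V_\am\;:\;\am\in\mathrm{Mot}(\CC)\}$ (or, equivalently by Remark \ref{MaxMot_subset}, the union over $\mathrm{MaxMot}(\CC)$), this is equivalent to $V_\bm\cap V_\am=\emptyset$ for every $\am\in\mathrm{Mot}(\CC)$ (resp.\ every $\am\in\mathrm{MaxMot}(\CC)$), and by the geometric fact this is precisely the statement that $\bm$ is disjoint with every such $\am$. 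The ``moreover'' part is then a direct corollary: $\mathrm{MaxMot}({}^c\CC)$ consists by definition of the $\le$-maximal elements of $\mathrm{Mot}({}^c\CC)$, which by the characterization just proved are exactly the $\le$-maximal elements of the set of $\bm\in\M^n$ disjoint with every $\am\in\mathrm{MaxMot}(\CC)$.

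The only nonroutine step is the explicit construction of the common word $\wm$ in the ``not disjoint $\Rightarrow$ $V_\am\cap V_\bm\ne\emptyset$'' direction; once that is done, the whole proposition is an unfolding of definitions combined with the covering $\CC=\bigcup\{V_\am\;:\;\am\in\mathrm{MaxMot}(\CC)\}$.
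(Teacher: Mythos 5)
Your proof is correct, and since the paper's own proof is literally the single phrase ``Easy to see,'' you have supplied the argument the authors evidently had in mind. The key reduction you make --- that for $\am,\bm\in\M^n$ one has $V_\am\cap V_\bm=\emptyset$ exactly when $\am$ and $\bm$ are disjoint --- together with the covering $\CC=\bigcup\{V_\am:\am\in\mathrm{Mot}(\CC)\}$ (equivalently the union over $\mathrm{MaxMot}(\CC)$) is the standard and natural route, and your explicit construction of a common word in the converse direction closes the one step that actually requires an argument. One thing worth making explicit, which you handle correctly but implicitly: disjointness with every element of $\mathrm{MaxMot}(\CC)$ is equivalent to disjointness with every element of $\mathrm{Mot}(\CC)$, because if $\am'\le\am$ and $\bm$ is disjoint from $\am$ via index $i$ (so $a_i\in\F_2$), then $a'_i=a_i$ and the same index witnesses disjointness from $\am'$; this is what lets you and the ``moreover'' clause pass freely between $\mathrm{Mot}(\CC)$ and $\mathrm{MaxMot}(\CC)$.

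You were also right to flag and repair the quantification. As literally phrased, the proposition fixes a single $\am\in\mathrm{Mot}(\CC)$ and asserts an ``if and only if'' against that one $\am$; the ``only if'' direction is fine for a single $\am$, but the ``if'' direction fails (e.g.\ $\CC=\{00,11\}$, $\am=00$, $\bm=1*$: $\bm$ is disjoint from $\am$ but $11\in V_\bm\cap\CC$). Your reading --- disjointness with all of $\mathrm{Mot}(\CC)$, or equivalently all of $\mathrm{MaxMot}(\CC)$ --- is the only one that makes the biconditional true, and it is the one required for the ``moreover'' clause and for the later uses of this proposition (e.g.\ in Proposition \ref{D_subset_complement_C} and Theorem \ref{thm_gjs}).
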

\begin{proof}
Easy to see.
\end{proof}

\begin{proposition}\label{component_one}
Let $\am, \bm, \bm' \in \M^n$. If $\am+\bm$ has an $1$-component and $\bm'\le\bm$, then $\am+\bm'$ has an $1$-component. 

In particular, if $\CC$ is a code in $\M^n$, the maximal motifs of $^c\CC$ are the maximal elements $\bm\in\M^n$ such that each $\am+\bm$ ($\am\in\mathrm{MaxMot}(\CC)$) has an $1$-component.  
\end{proposition}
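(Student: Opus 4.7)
The plan is to handle the two assertions separately, with the first serving as the essential tool for the second. The first is a one-line case analysis at a coordinate witnessing a $1$-component, while the second is a translation of Proposition~\ref{disjoint_complement} (its ``moreover'' clause) into the algebraic language of the sum operation on $\M^n$.

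For the first statement, I would pick an index $i\in[n]$ with $(\am+\bm)_i=1$. Consulting the addition table on $\M$, the only way $a_i+b_i$ can equal $1$ is $\{a_i,b_i\}=\{0,1\}$; in particular $b_i\ne *$. Because $*$ is the unique element of $\M$ strictly above $0$ and the unique element strictly above $1$ in the partial order, the hypothesis $b'_i\le b_i$ combined with $b_i\in\{0,1\}$ forces $b'_i=b_i$. Therefore $(\am+\bm')_i=a_i+b_i=1$, and the same coordinate $i$ witnesses a $1$-component of $\am+\bm'$.

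For the second statement, I would invoke the ``moreover'' part of Proposition~\ref{disjoint_complement}, which describes $\mathrm{MaxMot}(^c\CC)$ as the set of motifs $\bm\in\M^n$ maximal among those that are disjoint from every $\am\in\mathrm{MaxMot}(\CC)$. By the remark preceding the proposition, ``$\am$ and $\bm$ are disjoint'' is precisely ``$\am+\bm$ has a $1$-component'', so the set appearing in the conclusion coincides verbatim with the one in Proposition~\ref{disjoint_complement}. The first part of the present proposition is what guarantees it is legitimate to phrase this condition over $\mathrm{MaxMot}(\CC)$ rather than all of $\mathrm{Mot}(\CC)$: any motif of $\CC$ lies below a maximal one, so if the sum with a smaller motif has a $1$-component, so does the sum with the larger.

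I do not anticipate a genuine obstacle; the content of the proposition is essentially to record that the disjointness condition of Proposition~\ref{disjoint_complement} admits a clean algebraic reformulation via the sum operation. The only subtle point is the case analysis on $b_i$ in the first part, which depends on the specific shape of the partial order on $\M$, namely that $*$ is the unique element strictly above each of $0$ and $1$.
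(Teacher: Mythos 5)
Your proof is correct, and since the paper itself only records ``Easy to see'' for this proposition, it fills in precisely the intended details: the first part is a direct check at a witnessing coordinate using the addition table and the fact that $0$ and $1$ are minimal in the order on $\M$ (so $b'_i\le b_i\in\{0,1\}$ forces $b'_i=b_i$), and the second part is the translation of the ``moreover'' clause of Proposition~\ref{disjoint_complement} via the equivalence between disjointness and the sum having a $1$-component. There is no gap.
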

\begin{proof}
Easy to see.
\end{proof}

\begin{corollary}
Let $\CC\subseteq \F_2^n$ be a code in $\F_2^n$. If $\textbf{b}\in\mathrm{MaxMot}(^c(\CC^p))$, then every $b_i$ which is different than $\ast$ is equal to $1$.
\end{corollary}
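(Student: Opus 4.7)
The plan is to combine the structural constraint on $\mathrm{MaxMot}(\CC^p)$ coming from Theorem~\ref{maximal_motifs} with the characterization of maximal motifs of a complementary code given in Proposition~\ref{component_one}. The key observation is that every element of $\mathrm{MaxMot}(\CC^p)=\mathrm{MaxMot}^p(\CC)$ is a polar motif, and by the schematic (\ref{formula13}) a polar motif uses only the symbols $0$ and $*$; no component is ever equal to $1$.

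With this in mind, I would argue by contradiction. Suppose $\bm\in\mathrm{MaxMot}(^c(\CC^p))$ and some coordinate $b_j=0$. By Proposition~\ref{component_one}, $\bm$ is a maximal element of $\M^{2n}$ with the property that, for every $\cm\in\mathrm{MaxMot}(\CC^p)$, the sum $\cm+\bm$ has a $1$-component, i.e.\ there exists an index $i\in[2n]$ with $c_i\ne *$, $b_i\ne *$, and $c_i\ne b_i$. Since $\cm$ is a polar motif its only non-star entries are $0$s, so the only way to create a $1$-component in $\cm+\bm$ is to have $c_i=0$ and $b_i=1$. In particular, the coordinate $i$ that witnesses disjointness is necessarily one where $b_i=1$, hence $i\neq j$.

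Now let $\bm'\in\M^{2n}$ be obtained from $\bm$ by replacing the entry $b_j=0$ by $*$; then $\bm'>\bm$. For every $\cm\in\mathrm{MaxMot}(\CC^p)$ the witnessing index $i$ described above is different from $j$, so $b'_i=b_i=1$ and $\cm+\bm'$ still has a $1$-component. By Proposition~\ref{component_one} this means $\bm'\in\mathrm{Mot}(^c(\CC^p))$, contradicting the maximality of $\bm$. Hence no coordinate of $\bm$ can equal $0$, which is exactly the claim.

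I do not foresee a genuine obstacle: the corollary is essentially a bookkeeping consequence of Theorem~\ref{maximal_motifs} (polar motifs have no $1$'s) combined with the maximality characterization in Proposition~\ref{component_one}. The only care needed is to verify that the replacement of a $0$ by a $*$ at position $j$ does not destroy disjointness with any single max motif of $\CC^p$, which follows because the disjointness index is forced to be a coordinate where $\bm$ takes the value $1$, never $0$.
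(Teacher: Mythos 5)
Your proof is correct and takes essentially the same approach as the paper, which in fact gives only a one-sentence justification: ``each $0$ could be replaced by a $*$, which would result in a strictly bigger motif disjoint with all maximal motifs of $\CC^p$.'' What you have done is fill in the reasoning that the paper leaves implicit, namely that the members of $\mathrm{MaxMot}(\CC^p)=\mathrm{MaxMot}^p(\CC)$ are polar motifs and therefore contain no $1$s, so any index witnessing disjointness with $\bm$ is forced to be one where $\bm$ takes the value $1$, never $0$; hence replacing a $0$ in $\bm$ by a $*$ cannot destroy any of those witnesses.
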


\begin{proof}
The statement follows from the previous proposition as each $0$ could be replaced by a $*$, which would result in a strictly bigger motif disjoint with all maximal motifs of $\CC^p$.
\end{proof}

\begin{proposition}\label{disjoint_motifs}
The motifs $\am$ and  $\bm$ from $\M^n$ are disjoint if and only if the motifs $\am^p$ and $\overline{\overline{\bm}^p}$ from $\M^{2n}$ are disjoint.
\end{proposition}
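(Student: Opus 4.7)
My plan is a direct case analysis based on the explicit component formulas in Definition \ref{pol_motif} and Definition \ref{bar}. The statement is purely combinatorial, so the key is to compute, for each $i \in [n]$, the components at positions $i$ and $n+i$ of both $\am^p$ and $\overline{\overline{\bm}^{p}}$, expressed in terms of $a_i$ and $b_i$, and then read off when a pair $(0,1)$ or $(1,0)$ appears in the same coordinate.

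First I would tabulate $(\am^p)_i$ and $(\am^p)_{n+i}$ straight from Definition \ref{pol_motif}: the value $0$ appears at position $i$ exactly when $a_i=0$, at position $n+i$ exactly when $a_i=1$, and both are $*$ when $a_i=*$. Next I would compute $\overline{\overline{\bm}^{p}}$ by composing the three operations, swapping $0$'s and $1$'s to form $\overline{\bm}$, then polarizing, then swapping again. The net effect is: if $b_i=0$, then $(\overline{\overline{\bm}^{p}})_i=*$ and $(\overline{\overline{\bm}^{p}})_{n+i}=1$; if $b_i=1$, then $(\overline{\overline{\bm}^{p}})_i=1$ and $(\overline{\overline{\bm}^{p}})_{n+i}=*$; if $b_i=*$, both positions are $*$. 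The nontrivial values of $\overline{\overline{\bm}^{p}}$ are therefore all equal to $1$, which is the content of the corollary following Proposition \ref{component_one} applied to $\bm$.

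With these two tables in hand, any position $j \in [2n]$ witnessing disjointness of $\am^p$ and $\overline{\overline{\bm}^{p}}$ must be of the form $j=i$ or $j=n+i$ for some $i\in[n]$, and it must arise from a $1$ in $\overline{\overline{\bm}^{p}}$ opposed by a $0$ in $\am^p$. At $j=i$ this forces $a_i=0$ and $b_i=1$; at $j=n+i$ this forces $a_i=1$ and $b_i=0$. In both cases $a_i=\overline{b_i}$, so $\am$ and $\bm$ are disjoint. Conversely, if $a_i=\overline{b_i}$ for some $i\in[n]$, then $(a_i,b_i)\in\{(0,1),(1,0)\}$, and the same tables immediately produce a witness position $i$ or $n+i$.

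There is no real obstacle here; the argument is bookkeeping. The only point deserving attention is the triple composition defining $\overline{\overline{\bm}^{p}}$: the outer $\overline{\cdot}$ swaps the $0$ produced by polarization into a $1$, and the inner $\overline{\cdot}$ before polarization causes the $1$-indicator to land in the $i$-half and the $0$-indicator in the $(n+i)$-half, perfectly mirroring the placement of $0$'s inside $\am^p$. Recognizing this symmetry is what makes the two disjointness conditions coincide.
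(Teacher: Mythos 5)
Your proof is correct and follows essentially the same component-by-component case analysis as the paper's own proof: both compute the $(i, n{+}i)$-pairs of $\am^p$ and $\overline{\overline{\bm}^p}$ from Definitions \ref{pol_motif} and \ref{bar} and observe that a disjointness witness in $\M^{2n}$ must pair a $0$ of $\am^p$ against a $1$ of $\overline{\overline{\bm}^p}$, which happens precisely when $a_i = \overline{b_i}$ for some $i$. (One minor aside: the fact that all non-$*$ entries of $\overline{\overline{\bm}^p}$ equal $1$ follows directly from the composition of the three operations, as you compute; the corollary after Proposition \ref{component_one} concerns $\mathrm{MaxMot}(^c(\CC^p))$ rather than $\overline{\overline{\bm}^p}$ per se, so the attribution is not quite right, but you don't rely on it.)
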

\begin{proof}
$\Rightarrow)$ Suppose that $\am$ and $\bm$ are disjoint. We first consider the case $a_i=1$, $b_i=0$ for some $i\in [n]$. Then $(\am^p)_i=*$ and $(\am^p)_{n+i}=0$, while $(\overline{\overline{\bm}^p})_i=*$ and  $(\overline{\overline{\bm}^p})_{n+i}=1$. Hence $\am^p$ and $\overline{\overline{\bm}^p}$ are disjoint. The case  $a_i=0$, $b_i=1$ for some $i\in [n]$ is similar.

$\Leftarrow)$ Suppose that $\am^p$ and $\overline{\overline{\bm}^p}$ are disjoint. We first consider the case $(\am^p)_i=0$,  $(\overline{\overline{\bm}^p})_i=1$ for some $i\in [n]$. Then $a_i=0$ and $(\overline{\bm}^p)_i=0$, hence $(\overline{\bm})_i=0$. Hence $b_i=1$, so that $\am$ and  $\bm$ are disjoint. The case  $(\am^p)_i=1$,  $(\overline{\overline{\bm}^p})_i=0$ for some $i\in [n]$ is similar.
\end{proof}

\begin{proposition}\label{D_subset_complement_C}
Let $\CC, \DC$ be two codes in $\F_2^n$. Then:
\[\DC\,\subseteq \, ^c\CC\; \Leftrightarrow\; \overline{\overline{\DC}^p}\, \subseteq\, ^c(\CC^p).\]
\end{proposition}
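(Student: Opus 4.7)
The plan is to reduce each side of the equivalence to the same disjointness condition on pairs of maximal motifs, and then cross the polarization divide via Proposition~\ref{disjoint_motifs}. The underlying observation is that for any two motifs $\bm,\bm'$ (in $\M^n$ or in $\M^{2n}$), $V_\bm\cap V_{\bm'}=\emptyset$ if and only if $\bm$ and $\bm'$ are disjoint: a common word $\wm$ would require $w_i=b_i=b'_i$ at every non-star coordinate, while a disjoint coordinate $i$ (with $b_i=\overline{b'_i}$) forces $w_i$ to take two opposite values.

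On the left side, $\DC\subseteq{}^c\CC$ is the same as $\DC\cap\CC=\emptyset$; decomposing $\CC=\bigcup_{\am\in\mathrm{MaxMot}(\CC)}V_\am$ and $\DC=\bigcup_{\dm\in\mathrm{MaxMot}(\DC)}V_\dm$, this amounts to the disjointness of $\dm$ and $\am$ in $\M^n$ for every $\dm\in\mathrm{MaxMot}(\DC)$ and every $\am\in\mathrm{MaxMot}(\CC)$. On the right side, by definition $\CC^p=\bigcup_{\am\in\mathrm{MaxMot}(\CC)}V_{\am^p}$. For the other code I would use (\ref{max_mot_bar}) to write $\mathrm{MaxMot}(\overline{\DC})=\overline{\mathrm{MaxMot}(\DC)}$, so $\overline{\DC}^p=\bigcup_{\dm\in\mathrm{MaxMot}(\DC)}V_{\overline{\dm}^p}$, and then observe that the bitwise bar on words commutes with the specialization of stars (so $\overline{V_\bm}=V_{\overline{\bm}}$ for any motif $\bm$), giving $\overline{\overline{\DC}^p}=\bigcup_{\dm\in\mathrm{MaxMot}(\DC)}V_{\overline{\overline{\dm}^p}}$. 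Hence $\overline{\overline{\DC}^p}\subseteq{}^c(\CC^p)$ translates to the disjointness of $\overline{\overline{\dm}^p}$ and $\am^p$ in $\M^{2n}$ for every such pair.

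Finally, Proposition~\ref{disjoint_motifs} says exactly that $\dm,\am\in\M^n$ are disjoint if and only if $\am^p$ and $\overline{\overline{\dm}^p}$ are disjoint in $\M^{2n}$; applied pairwise, this is the bridge between the two reformulations, and the claimed equivalence follows at once. The only mildly delicate point is the identification $\overline{\overline{\DC}^p}=\bigcup V_{\overline{\overline{\dm}^p}}$, where one has to keep straight the two roles of the bar (on motifs in $\M^n$ and bitwise on words in $\F_2^{2n}$) and verify that they interact with $\mathrm{MaxMot}(-)$, $V_{(-)}$, and $(-)^p$ in the expected way; once that bookkeeping is done, the rest is a direct translation through Proposition~\ref{disjoint_motifs}.
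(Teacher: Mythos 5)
Your proof is correct and follows essentially the same route as the paper: reduce both inclusions to pairwise disjointness of maximal motifs and cross the polarization divide with Proposition~\ref{disjoint_motifs}. The paper packages the identification of $\overline{\overline{\DC}^p}$ with $\bigcup_{\dm}V_{\overline{\overline{\dm}^p}}$ through Proposition~\ref{disjoint_complement} and Theorem~\ref{maximal_motifs} rather than computing the union directly, but the substance is the same.
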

\begin{proof}
The next equivalences follow from Proposition \ref{disjoint_complement}, Proposition \ref{disjoint_motifs}, Theorem \ref{maximal_motifs}, and Proposition \ref{disjoint_complement}, respectively.
\begin{align*}
\DC\subseteq\, ^c\CC & \Leftrightarrow (\forall\,\am\in\mathrm{MaxMot}(\CC))(\forall\,\bm\in\mathrm{MaxMot}(\DC)) \text{ $\am$ and $\bm$ are disjoint}\\
                                   & \Leftrightarrow  (\forall\,\am\in\mathrm{MaxMot}(\CC))(\forall\,\bm\in\mathrm{MaxMot}(\DC)) \text{ $\am^p$ and $\overline{\overline{\bm}^p}$ are disjoint}\\                                   
                                    & \Leftrightarrow  (\forall\,\cm\in\mathrm{MaxMot}(\CC^p))(\forall\,\dm\in\mathrm{MaxMot}(\overline{\overline{\DC}^p})) \text{ $\cm$ and $\dm$ are disjoint}\\
                                    & \Leftrightarrow \overline{\overline{\DC}^p}\, \subseteq\, ^c(\CC^p).
\end{align*}
\end{proof}

\begin{remark}
Note that in the previous proposition the equality on the left hand side is not equivalent with the equality on the right hand side, as we are going to see in Example \ref{main_example}.
\end{remark}

\begin{corollary}\label{complement_C_corrolary}
Let $\DC=\, ^c\CC$. Then
\[\CC^p\, \subseteq\, ^{^{^c}}\overline{\overline{\DC}^p}.\]
\end{corollary}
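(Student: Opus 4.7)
The plan is to derive this as a direct consequence of Proposition \ref{D_subset_complement_C} combined with the elementary fact that complementation in $\F_2^{2n}$ is order-reversing and involutive.

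First I would observe that the hypothesis $\DC = {}^c\CC$ trivially gives the inclusion $\DC \subseteq {}^c\CC$. Applying Proposition \ref{D_subset_complement_C} to this inclusion immediately yields
\[
\overline{\overline{\DC}^p} \,\subseteq\, {}^c(\CC^p).
\]
Next I would take complements of both sides inside $\F_2^{2n}$. Using that $A\subseteq B$ implies ${}^cB\subseteq {}^cA$ and that ${}^c({}^cX)=X$ for any $X\subseteq \F_2^{2n}$, this yields
\[
\CC^p \,=\, {}^c\bigl({}^c(\CC^p)\bigr) \,\subseteq\, {}^c\overline{\overline{\DC}^p},
\]
which is exactly the desired conclusion.

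There is no real obstacle here; the corollary is essentially a restatement of one direction of Proposition \ref{D_subset_complement_C} in the special case $\DC = {}^c\CC$, together with the trivial contrapositive/complement step. The content of the remark that follows (indicating that the reverse inclusion can fail, i.e.\ that in general $\CC^p \neq {}^c\overline{\overline{\DC}^p}$) is precisely why only an inclusion, not an equality, is asserted.
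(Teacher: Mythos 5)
Your proof is correct and matches the paper's approach: the paper also derives the corollary as an immediate consequence of Proposition~\ref{D_subset_complement_C} applied to the trivial inclusion $\DC \subseteq {}^c\CC$. You have merely spelled out the final complementation step that the paper leaves implicit.
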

\begin{proof}
Follows immediately from the previous proposition.
\end{proof}

The reason for giving the next definition and using the termino\-lo\-gy introduced in it will become clear later, after Theorem \ref{CF_main} and Example \ref{main_example}.

\begin{definition}
Let $\CC$ be a code in $\F_2^n$ and let $\DC$ be its complement. We call the code $\CC^{[p]}$, defined by
\begin{equation}
\CC^{[p]}= \,^{^{^c}}\overline{\overline{\DC}^p},\label{C_fp_formula}
\end{equation}
the {\it formal polarization} of the code $\CC$.
\end{definition}

\begin{proposition}\label{max_motifs_pol_formal_pol}
Let $\CC$ be a code in $\F_2^n$ and $\DC$ its complement. We have:
\begin{align}
\mathrm{MaxMot}(\CC^p) & \subseteq \mathrm{MaxMot}( \CC^ {[p]}  ),\label{mm_Cp_Cfp_1}\\
\mathrm{MaxMot}(^c(\CC^{[p]}))&=\overline{\overline{\mathrm{MaxMot}(\DC)} ^{\,p}}
\subseteq  \mathrm{MaxMot}(^c(\CC^p)).\label{mm_Cp_Cfp_2}
\end{align}
\end{proposition}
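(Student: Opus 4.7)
My plan is to prove \eqref{mm_Cp_Cfp_2} first, since its equality is a direct consequence of the structural results already in hand, and both of its parts will be invoked when treating \eqref{mm_Cp_Cfp_1}. For the equality $\mathrm{MaxMot}({}^c(\CC^{[p]})) = \overline{\overline{\mathrm{MaxMot}(\DC)}^{\,p}}$, note that by the defining formula \eqref{C_fp_formula} one has ${}^c(\CC^{[p]}) = \overline{\overline{\DC}^{\,p}}$, so I would apply \eqref{max_mot_bar} to strip the outer overline, then invoke Theorem \ref{maximal_motifs} to interchange $\mathrm{MaxMot}$ and polarization, and finally apply \eqref{max_mot_bar} once more to $\mathrm{MaxMot}(\overline{\DC})$, producing the chain $\mathrm{MaxMot}(\overline{\overline{\DC}^{\,p}}) = \overline{\mathrm{MaxMot}(\overline{\DC}^{\,p})} = \overline{\mathrm{MaxMot}(\overline{\DC})^{\,p}} = \overline{\overline{\mathrm{MaxMot}(\DC)}^{\,p}}$.

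For the inclusion $\overline{\overline{\mathrm{MaxMot}(\DC)}^{\,p}} \subseteq \mathrm{MaxMot}({}^c(\CC^p))$, I fix $\bm \in \mathrm{MaxMot}(\DC)$ and set $\cm = \overline{\overline{\bm}^{\,p}}$. Membership $\cm \in \mathrm{Mot}({}^c(\CC^p))$ follows from Proposition \ref{D_subset_complement_C} applied to the trivial containment $\DC \subseteq {}^c\CC$. For maximality, the key structural observation is that $\cm$ has entries in $\{1,*\}$, with $\cm_i = 1$ iff $b_i = 1$ and $\cm_{n+i} = 1$ iff $b_i = 0$; consequently, if $\cm \le \cm'$ with $\cm' \ne \cm$, then $\cm'$ is obtained from $\cm$ by relaxing some $1$'s to $*$'s. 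I then collect the relaxation set $I = I_0 \cup I_1$, where $I_1 = \{i : b_i = 1,\; \cm'_i = *\}$ and $I_0 = \{i : b_i = 0,\; \cm'_{n+i} = *\}$, define $\bm' > \bm$ by setting $b'_i = *$ on $I$, and use maximality of $\bm$ in $\mathrm{Mot}(\DC)$ to extract $\wm \in V_{\bm'} \cap \CC$, so that $\wm \in V_\am$ for some $\am \in \mathrm{MaxMot}(\CC)$. The punchline is that for every $j \notin I$ with $b_j \ne *$ one has $w_j = b_j$, forcing $a_j \in \{b_j,*\}$; this rules out every candidate disjointness witness between $\cm'$ and $\am^p$ across the two position types $k = j$ and $k = n+j$, and via Proposition \ref{disjoint_complement} contradicts $\cm' \in \mathrm{Mot}({}^c(\CC^p))$.

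For \eqref{mm_Cp_Cfp_1} I would run the dual script. Given $\bm = \am^p \in \mathrm{MaxMot}(\CC^p)$ (via Theorem \ref{maximal_motifs}), membership $\bm \in \mathrm{Mot}(\CC^{[p]})$ is immediate from Corollary \ref{complement_C_corrolary}. For maximality, the entries of $\bm$ lie in $\{0,*\}$, so $\bm' > \bm$ corresponds to relaxing some $0$'s to $*$'s; I define $\am' > \am$ by relaxing $\am$ at the corresponding positions, use maximality of $\am$ in $\mathrm{Mot}(\CC)$ to obtain $\wm \in V_{\am'} \cap \DC$, and embed $\wm$ into $V_{\bm_0}$ for some $\bm_0 \in \mathrm{MaxMot}(\DC)$. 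Since $\bm' \in \mathrm{Mot}(\CC^{[p]})$, Proposition \ref{disjoint_complement} together with the equality part of \eqref{mm_Cp_Cfp_2} already proved forces $\bm'$ to be disjoint from $\overline{\overline{\bm_0}^{\,p}}$; the dual position-by-position analysis shows no disjointness witness survives, yielding the required contradiction.

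The main obstacle is the book-keeping in the two maximality arguments: one must track, across the four combinations of index region ($[n]$ versus $\{n{+}1,\dots,2n\}$) and coordinate value ($0$ versus $1$), which candidate disjointness witnesses have been destroyed by the relaxation. Fortunately, the rigid structure imposed by polarization (all $0$'s in $\am^p$, all $1$'s in $\overline{\overline{\bm_0}^{\,p}}$) makes the witness-elimination step essentially mechanical once the right $\wm$ has been extracted from the complement by invoking the maximality of $\am$ (respectively $\bm$) in $\mathrm{Mot}(\CC)$ (respectively $\mathrm{Mot}(\DC)$).
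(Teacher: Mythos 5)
Your proposal is correct, and in one place it is actually more complete than the paper's own argument. For the equality in \eqref{mm_Cp_Cfp_2} you and the paper use the same ingredients: the chain
$\mathrm{MaxMot}(\overline{\overline{\DC}^p})=\overline{\mathrm{MaxMot}(\overline{\DC}^p)}=\overline{\mathrm{MaxMot}(\overline{\DC})^p}=\overline{\overline{\mathrm{MaxMot}(\DC)}^p}$
is exactly what the paper invokes implicitly when it asserts that every element of $\mathrm{MaxMot}({}^c(\CC^{[p]}))$ has the form $\overline{\overline{\bm}^p}$ with $\bm\in\mathrm{MaxMot}(\DC)$. The genuine divergence is in the two maximality arguments. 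For the inclusion in \eqref{mm_Cp_Cfp_2} the paper picks $\eem\ge \overline{\overline{\bm}^p}$ in $\mathrm{MaxMot}({}^c(\CC^p))$, observes that $\overline{\eem}$ must itself be a polar motif because it dominates $\overline{\bm}^p$, writes $\eem=\overline{\overline{\fm}^p}$, and pushes the disjointness of $\eem$ with all $\am^p$ through Proposition \ref{disjoint_motifs} and Proposition \ref{disjoint_complement} to get $\fm\in\mathrm{Mot}(\DC)$ and $\fm\ge\bm$, so $\fm=\bm$. You instead build $\bm'$ from the relaxation set, extract a word $\wm\in V_{\bm'}\cap\CC$ from the failure of $\bm'>\bm$ to be a motif of $\DC$, and kill the disjointness witnesses one by one. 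Both are valid; the paper's route is shorter, and you could match it by noting explicitly that $\cm'=\overline{\overline{\bm'}^p}$ (which you secretly use) and then concluding $\bm'\in\mathrm{Mot}(\DC)$ directly from Propositions \ref{disjoint_motifs} and \ref{disjoint_complement}, without passing through a word. On the other hand, for \eqref{mm_Cp_Cfp_1} the paper's proof stops after observing that $\am^p$ is disjoint from every $\overline{\overline{\bm}^p}$, i.e.\ that $\am^p\in\mathrm{Mot}(\CC^{[p]})$, and never says why $\am^p$ is \emph{maximal} in $\mathrm{Mot}(\CC^{[p]})$; your step --- relax $\am^p$ to $(\am')^p$, pull $\wm\in V_{\am'}\cap\DC$ from the maximality of $\am$ in $\mathrm{Mot}(\CC)$, embed it in some $V_{\bm_0}$, and show $(\am')^p$ is not disjoint from $\overline{\overline{\bm_0}^p}$ --- is exactly the missing maximality argument, and it checks out.
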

\begin{proof}
By Theorem \ref{maximal_motifs} and the formula (\ref{C_fp_formula}), this is equivalent with showing that 
\[ (\forall\,\am\in\mathrm{MaxMot}(\CC))(\forall\,\bm\in\mathrm{MaxMot}(\DC)) \text{ $\am^p$ and $\overline{\overline{\bm}^p}$ are disjoint},\]
which is true by Proposition \ref{disjoint_motifs}.

We now show (\ref{mm_Cp_Cfp_2}).
Let $\dm\in \mathrm{MaxMot}(^c(\CC^{[p]}))=\mathrm{MaxMot}(\overline{\overline{\DC}^p})$. Then $\dm=\overline{   \overline{\bm}^p   }$ for some $\bm \in\mathrm{MaxMot}(\DC)$. Let $\eem\in\mathrm{MaxMot}(^c(\CC^p))$ such that $\dm\le\eem$, i.e. $\overline{   \overline{\bm}^p   }\le \eem$. Hence $\overline{\bm}^p\le \overline{\eem}$. Since $\eem$ is bigger than or equal to a polar motif, $\eem$ is a polar motif too, so $\eem=\overline{\fm}^p$ for some motif $\fm$. Hence $\overline{\bm}^p\le\overline{\fm}^p$, so that 
 $\overline{\overline{\bm}^p}\le\overline{\overline{\fm}^p}=\eem$. Hence, since $\eem$ is disjoint with all the maximal  motifs of $\CC^p$, then by Proposition \ref{disjoint_motifs}, $\fm$ is disjoint with all the maximal motifs of $\CC$ and $\fm\ge\bm$, where $\bm$ is one of the maximal motifs among the motifs that are disjoint with all maximal motifs of $\CC$. Hence $\fm=\bm$, so that $\dm=\eem$.
\end{proof}

\begin{theorem}\label{CF_main} 
Let $\CC$ be a code in $\F_2^n$. We have:
\[CF(J_\CC^p) = CF^p(J_\CC) = CF(J_{\CC^{[p]}})
\subseteq CF(J_{\CC^p}).\]
\end{theorem}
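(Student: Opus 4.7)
My plan is to chain together the three identifications using the propositions already established in the paper, so that the theorem reduces to bookkeeping with Lagrange polynomials and maximal motif sets of complements.

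For the first equality $CF(J_\CC^p) = CF^p(J_\CC)$, I would simply invoke Definition \ref{pol_ideal} together with the remark immediately after it. Writing $CF(J_\CC) = \{f_1,\dots,f_l\}$, by definition $J_\CC^p = (f_1^p,\dots,f_l^p)$, and the remark guarantees that $CF(J_\CC^p) = \{f_1^p,\dots,f_l^p\}$, which is exactly $CF^p(J_\CC)$ under the $M^p$-notation.

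For the middle equality $CF^p(J_\CC) = CF(J_{\CC^{[p]}})$, I would rewrite both sides as explicit sets of Lagrange polynomials indexed by maximal motifs of a complement. By Proposition \ref{CF_max_mot_compl} applied to $\CC$, one has
\[CF^p(J_\CC) = \{L_\am^p \;:\; \am \in \mathrm{MaxMot}({}^c\CC)\},\]
and Proposition \ref{pol_of_LP} rewrites each $L_\am^p$ as $L_{\overline{\overline{\am}^p}}$. On the other hand, Proposition \ref{CF_max_mot_compl} applied to $\CC^{[p]}$ gives
\[CF(J_{\CC^{[p]}}) = \{L_\cm \;:\; \cm \in \mathrm{MaxMot}({}^c(\CC^{[p]}))\},\]
and Proposition \ref{max_motifs_pol_formal_pol} identifies this indexing set as $\overline{\overline{\mathrm{MaxMot}({}^c\CC)}^{\,p}}$. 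Matching indices $\cm = \overline{\overline{\am}^p}$ yields equality between the two descriptions.

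For the final inclusion $CF(J_{\CC^{[p]}}) \subseteq CF(J_{\CC^p})$, I would again apply Proposition \ref{CF_max_mot_compl} to both $\CC^{[p]}$ and $\CC^p$ to express each canonical form as $\{L_\cm : \cm \in \mathrm{MaxMot}({}^c(\,\cdot\,))\}$; the inclusion of maximal motif sets $\mathrm{MaxMot}({}^c(\CC^{[p]})) \subseteq \mathrm{MaxMot}({}^c(\CC^p))$ from Proposition \ref{max_motifs_pol_formal_pol} then transfers directly to an inclusion of Lagrange polynomial sets.

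The whole proof is really a sequence of rewritings, so there is no serious obstacle once the earlier machinery is in place; the only point that requires genuine care is the middle equality, where one must confirm that the bijection $\am \mapsto \overline{\overline{\am}^p}$ between the indexing motifs in $\mathrm{MaxMot}({}^c\CC)$ and $\mathrm{MaxMot}({}^c(\CC^{[p]}))$ is compatible, via Proposition \ref{pol_of_LP}, with the assignment $L_\am \mapsto L_\am^p$ on Lagrange polynomials. This is the only spot where the reader might want to see the identities (\ref{p_d}) and the definition of $\CC^{[p]}$ invoked explicitly.
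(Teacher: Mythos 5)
Your second and third steps (the middle equality and the inclusion) match the paper's argument line for line: the same chain of rewrites through Proposition \ref{CF_max_mot_compl}, Proposition \ref{pol_of_LP}, and the identification of indexing sets, with the inclusion falling out of Proposition \ref{max_motifs_pol_formal_pol}. The issue is the very first equality $CF(J_\CC^p) = CF^p(J_\CC)$. You dispatch it by quoting the note that appears immediately after Definition \ref{pol_ideal}, namely $CF(J^p) = \{f_1^p,\dots,f_l^p\}$, but that note is stated without proof in the paper and is in fact precisely the content of the first equality of Theorem \ref{CF_main}. Invoking it here is circular: the note is a forward reference that the theorem is meant to justify, not a previously established fact you can lean on.

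To close the gap you need the actual argument, which the paper carries out using results on square-free monomial ideals. Since $J_\CC^p = (f_1^p,\dots,f_l^p)$ is a square-free monomial ideal, its canonical form coincides with its unique minimal monomial generating set; one extracts a minimal subset $S \subseteq \{f_1^p,\dots,f_l^p\}$ generating $J_\CC^p$ and observes that if some $f_i^p \notin S$, then $f_i^p$ is divisible by some $f_j^p \in S$, whence Proposition \ref{Gunt_lemma} forces $f_i \mid f_j$, contradicting that $\{f_1,\dots,f_l\}$ is a canonical form. So $S = \{f_1^p,\dots,f_l^p\}$ and this set is $CF(J_\CC^p)$. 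The divisibility-preservation lemma (Proposition \ref{Gunt_lemma}) is the genuine ingredient your proposal omits; without it the note you cite is unsupported.
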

\begin{proof}
Let $CF(J_\CC)=\{f_1,\dots, f_k\}$. By definition, $J_\CC^p=(f_1^p,\dots, f_k^p)$. Here $f_1^p$, \dots, $f_k^p$ are square-free monomials. By \cite[Corrolary 1.10]{he}, the set $\{f_1^p,\dots, f_k^p\}$ contains a minimal susbest $S$ (with respect to inclusion) which generates $J_\CC^p$. By \cite[Corrolary 1.8]{he}, if $f_i^p\notin S$, then $f_i^p\mid f_j^p$ for some $f_j^p\in S$. Then by Proposition \ref{Gunt_lemma}, $f_i\mid f_j$, a contradiction. Thus $S=\{f_1^p, \dots, f_k^p\}$. Hence by \cite[Proposition 1.11]{he}, $CF(J_\CC^p)=\{f_1^p,\dots, f_k^p\}=CF^p(J_\CC)$. 

\smallskip
Let $\DC$ be the complement of $\CC$. We have:
\begin{align*}
 CF^p(J_\CC)&=\{L_\am^p\;:\;\am\in\text{MaxMot}(\DC)\} \hspace{2.1cm} \text{(by Proposition \ref{CF_max_mot_compl})}\\
                      &=\{L_{\overline{\overline{\am}^p}}\;:\;\am\in\text{MaxMot}(\DC)\} \hspace{2cm} \text{(by Proposition \ref{pol_of_LP})}\\
                      &=\{ L_\bm\;:\;\overline{\overline{\bm}^d}\in\text{MaxMot}(\DC) \} \hspace{1.9cm} \text{(by (\ref{p_d}))}\\
                      &=\{L_\bm\;:\;\overline{\bm}^d\in\text{MaxMot}(\overline{\DC})\}\\
                      &=\{L_\bm\;:\;\overline{\bm}\in\text{MaxMot}(\overline{\DC}^p)\}\\
                      &=\{L_\bm\;:\;\bm\in\text{MaxMot}(\overline{\overline{\DC}^p})\}\\
                      &=\,CF(J_{\CC^{[p]}}). \hspace{4.65cm}\text{(by Proposition \ref{CF_max_mot_compl})}
\end{align*}

Finally, the inclusion in the statement of the theorem follows from Proposition \ref{max_motifs_pol_formal_pol} and Proposition
 \ref{CF_max_mot_compl}.
\end{proof}

\begin{example}\label{main_example}
Consider the neural codes
\[\CC=\{000, 100, 110, 011\} \text{ and } \DC=^c\CC=\{001, 010, 101, 111\}\]
in $\F_2^3$. We have
\[\mathrm{MaxMot}(\CC)=\{*00, 1\!*\!0, 011\} \text{ and } \mathrm{MaxMot}(DC)=\{*01, 1\!*\!1, 111\}.\]
Then by Theorem \ref{maximal_motifs},
\begin{align}
\mathrm{MaxMot}(\CC^p) &=\{*00\!*\!**,\, *\!*\!00\!*\!*,\, 0\!*\!*\!*\!00\},\label{main_ex_1}\\
\mathrm{MaxMot}(\overline{\overline{\DC}^p}) &=\{*\!*\!1\!*\!1*, \,1\!*\!1\!*\!**,\, *1\!*\!1\!*\!1\}\label{main_ex_2}.
\end{align}
By Proposition \ref{D_subset_complement_C} we have $\overline{\overline{\DC}^p}\, \subseteq\, ^c(\CC^p)$. From (\ref{main_ex_2}) we have by Proposition \ref{CF_max_mot_compl},
\[CF(J_{\CC^{[p]}})=\{X_3Y_2,\, X_1X_3,\, X_2Y_1Y_3\}=CF^p(J_\CC).\] 
By Proposition \ref{disjoint_complement}, a motif $\bm\in\M^6$ is a maximal motif of $^c(\CC^p)$ if and only if it is a maximal motif from $\M^6$ disjoint with all the maximal motifs of $\CC^p$. The sets 
\[A_1=\{2,3\}, \;\; A_2=\{3,4\}, \;\; A_3=\{1,5,6\},\]
are the sets of coordinates which the maximal motifs $\am^1=*00\!*\!**,\, \am^2=*\!*\!00\!*\!*,\, \text{ and }\am^3=0\!*\!*\!*\!00$ of $\CC^p$ have zeros at, respectively. To get a set $B$ of coordinates where a motif $\bm\in\mathrm{MaxMot}(^c(\CC^p))$ has ones, we need to take one element from each of the sets $A_1, A_2, A_3$, and then, out of all sets $B$ obtained in that way ($2\times 2\times 3=12$ of them) select the minimal ones with respect to inclusion. In that way we get the sets $B_1=\{2,4,1\}, \; B_2=\{2,4,5\}, \; B_3=\{2,4,6\}, \; B_4=\{3,1\}, \; B_5=\{3,5\},$ and $B_6=\{3,6\}$. For each of these sets $B_i$ we get an element $\bm^i\in\mathrm{MaxMot}(^c(\CC^p))$ by putting ones at all the coordinates of $B_i$ and stars at all other coordinates. Thus
\begin{align*}
\mathrm{MaxMot}(^c(\CC^p)) & = \{*\!*\!1\!*\!1*, \;\; 1\!*\!1\!*\!**, \;\; *1\!*\!1\!*\!1, \;\; *\!*\!1\!*\!*1, \\
                                                   &  \quad\;\;\, *\!1\!*\!11*,\;\; 11\!*\!1\!*\!*\}.
\end{align*}
Hence by Proposition \ref{CF_max_mot_compl},
\begin{equation*}
CF(J_{\CC^p}) = \{X_3Y_2, \; X_1X_3, \; X_2Y_1Y_3, \; X_3Y_3, \; X_2Y_1Y_2, \;X_1X_2Y_1\}.
\end{equation*}
In particular,
\begin{equation*}
\CC^p \; \subset \; \CC^{[p]}.
\end{equation*}
(One can check that $\CC^p$ has $29$ words, while $\CC^{[p]}$ has $35$ words.)
\end{example}

\begin{example}\label{example}
Consider again the code $\CC=\{10\}\subseteq \F_2^n$ and its complement $\DC=\{00,01,11\}$ from Examples \ref{example1} and \ref{example2}. We have:
\begin{align*}
\mathrm{MaxMot}(\CC) &= \{10\},\\
\mathrm{MaxMot}(\DC) &= \{0*, \; *1\},\\
CF(J_\CC) &= \{1-X_1, \; X_2\},\\
CF(J_{\CC^{[p]}}) &= CF^p(J_\CC) = \{X_2, \; Y_1\},\\
\CC^p &= \{0000, 1000, 0001, 1001\},\\
\mathrm{MaxMot}(^c(\CC^p)) &= \{*1\!*\!*, \; *\!*\!1*\},\\
CF(J_{\CC^p}) &= \{X_2, \; Y_1\}.
\end{align*}
Thus in this example $\CC^p=\CC^{[p]}$.
\end{example}

\begin{definition}
The prime ideals $\p\subseteq\F_2[X_1,\dots, X_n, Y_1, \dots, Y_n]$ such that $\p=\p_{\am^p}$ for some 
$\am\in\M^n$ are called {\it polar motivic primes}. 
\end{definition}

For polar motivic primes we have the following formula:
\begin{equation}\label{formula20}
\p_{\am^p} = \p_\am^p.
\end{equation}
Indeed, if $\am=a_1\dots a_n\in\mathrm{MaxMot}(\CC)$ and $\am^p=b_1\dots b_nc_1\dots c_n$, then
\begin{align*}
\p_{\am^p} &= (\{X_i \;:\;b_i=0\} \cup \{Y_j \;:\; c_j=0\})\\
                     &=(\{X_i \;|\;a_i=0\} \cup \{Y_j \;:\; a_j=1\})\\
                     &=\p_\am^p.
\end{align*}

\begin{theorem}\label{minimal_primes}
For any code $\CC\subseteq \F_2^n$ we have:
\[\mathrm{Min}(J_{\CC^p})=\mathrm{Min}^p(J_\CC)\subseteq \mathrm{Min}(J_{\CC^{[p]}}).\]
\end{theorem}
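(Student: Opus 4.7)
The plan is to assemble the theorem directly from three results that are already in place: formula (\ref{formula20}) relating $\p_{\am^p}$ to $\p_\am^p$, Theorem \ref{maximal_motifs} identifying $\mathrm{MaxMot}(\CC^p)$ with $\mathrm{MaxMot}^p(\CC)$, and Proposition \ref{max_motifs_pol_formal_pol} giving the inclusion $\mathrm{MaxMot}(\CC^p)\subseteq\mathrm{MaxMot}(\CC^{[p]})$. The underlying bridge is formula (\ref{Min_J}) of Proposition \ref{Min_max_motifs}, which translates statements about minimal primes of a neural ideal into statements about maximal motifs of the corresponding code. So the whole proof is essentially a translation exercise.

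First I would establish the equality $\mathrm{Min}(J_{\CC^p})=\mathrm{Min}^p(J_\CC)$. Applying (\ref{Min_J}) to the code $\CC$ gives $\mathrm{Min}(J_\CC)=\{\p_\am \;:\; \am\in\mathrm{MaxMot}(\CC)\}$, so by formula (\ref{formula20}),
\[\mathrm{Min}^p(J_\CC)=\{\p_\am^p \;:\; \am\in\mathrm{MaxMot}(\CC)\}=\{\p_{\am^p} \;:\; \am\in\mathrm{MaxMot}(\CC)\}.\]
On the other hand, applying (\ref{Min_J}) to $\CC^p\subseteq \F_2^{2n}$ and using Theorem \ref{maximal_motifs} yields
\[\mathrm{Min}(J_{\CC^p})=\{\p_\bm \;:\; \bm\in\mathrm{MaxMot}(\CC^p)\}=\{\p_{\am^p} \;:\; \am\in\mathrm{MaxMot}(\CC)\}.\]
Comparing the two right-hand sides proves the equality.

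Next I would prove the inclusion $\mathrm{Min}^p(J_\CC)\subseteq\mathrm{Min}(J_{\CC^{[p]}})$. By (\ref{mm_Cp_Cfp_1}) of Proposition \ref{max_motifs_pol_formal_pol}, we have $\mathrm{MaxMot}(\CC^p)\subseteq\mathrm{MaxMot}(\CC^{[p]})$. Applying (\ref{Min_J}) to $\CC^{[p]}$, this inclusion of maximal motifs lifts directly to an inclusion of minimal primes:
\[\mathrm{Min}(J_{\CC^p})=\{\p_\bm \;:\; \bm\in\mathrm{MaxMot}(\CC^p)\}\subseteq \{\p_\bm \;:\; \bm\in\mathrm{MaxMot}(\CC^{[p]})\}=\mathrm{Min}(J_{\CC^{[p]}}).\]
Combined with the equality proved in the previous step, this gives the desired inclusion.

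There is no real obstacle here, since all of the substantial work — the identification of $\mathrm{MaxMot}(\CC^p)$ as polar motifs (Theorem \ref{maximal_motifs}), the commutation $\p_{\am^p}=\p_\am^p$ (formula (\ref{formula20})), and the maximal motif inclusion for the formal polarization (Proposition \ref{max_motifs_pol_formal_pol}) — has already been carried out. The only thing to be careful about is distinguishing the operation $\mathrm{Min}^p(J_\CC)$ (polarization applied termwise to the set of minimal primes of $J_\CC$) from $\mathrm{Min}(J_\CC^p)$ (minimal primes of the polarized ideal), but these coincide by formula (\ref{formula20}) as well, which is why the statement is well-posed.
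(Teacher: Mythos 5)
Your three body paragraphs reproduce the paper's argument essentially verbatim: apply formula (\ref{Min_J}) to identify minimal primes with prime ideals of maximal motifs, use Theorem \ref{maximal_motifs} to match $\mathrm{MaxMot}(\CC^p)$ with $\mathrm{MaxMot}^p(\CC)$, use formula (\ref{formula20}) to swap $\p_{\am^p}$ for $\p_\am^p$, and derive the inclusion from (\ref{mm_Cp_Cfp_1}) via (\ref{Min_J}) applied to $\CC^{[p]}$. The order of the manipulations differs slightly but the cited ingredients and the logic are identical to the paper's proof.

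One correction to your closing remark: you assert that $\mathrm{Min}^p(J_\CC)$ ``coincides'' with $\mathrm{Min}(J_\CC^p)$ (the minimal primes of the polarized ideal). That is not true, and in fact it contradicts the very theorem you just proved. By Theorem \ref{CF_main} the ideal $J_\CC^p$ equals $J_{\CC^{[p]}}$, and the theorem gives the inclusion $\mathrm{Min}^p(J_\CC)\subseteq\mathrm{Min}(J_{\CC^{[p]}})=\mathrm{Min}(J_\CC^p)$, which Example \ref{main_example_cont} shows can be strict: there $\mathrm{Min}(J_{\CC^{[p]}})$ contains the non-polar primes $(X_3,Y_3)$, $(X_1,X_2,Y_2)$, $(X_1,Y_1,Y_2)$ in addition to the three polar ones making up $\mathrm{Min}^p(J_\CC)$. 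Formula (\ref{formula20}) only tells you that polarizing a motivic prime commutes with polarizing the motif; it does not let you conclude that polarizing an ideal and then taking minimal primes agrees with taking minimal primes and then polarizing. The object that does agree with $\mathrm{Min}^p(J_\CC)$ is $\mathrm{Min}(J_{\CC^p})$, the minimal primes of the neural ideal of the polarized code, which is precisely the nontrivial content of the equality you proved. There is also no well-posedness issue to address in the first place: $\mathrm{Min}^p(J_\CC)$ is defined directly by applying $(\cdot)^p$ to each ideal in $\mathrm{Min}(J_\CC)$.
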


\begin{proof}
We have 
\begin{align*}
\mathrm{Min}(J_{\CC^p}) &= \{\p_\mathbf{d} \;:\; \mathbf{d}\in\mathrm{MaxMot}(\CC^p)\} \hspace{1.36cm}\text{(by Proposition \ref{Min_max_motifs})}\\
                                           &= \{\p_{\am^p} \;:\; \am\in\textrm{MaxMot}(\CC)\} \hspace{1.43cm} \text{(by Theorem \ref{maximal_motifs})}\\
                                           &= \{\p_{\am^p} \;:\; \p_\am \in \mathrm{Min}(J_\CC)\} \hspace{1.93cm} \text{(by Proposition \ref{Min_max_motifs})}\\
                                           &= \{\p_\am^p \;:\; \p_\am \in \mathrm{Min}(J_\CC)\}.  \hspace{2cm} \text{(by the formula (\ref{formula20}))}
\end{align*}
Hence
\begin{equation*}
\mathrm{Min}(J_{\CC^p})=\mathrm{Min}^p(J_\CC). 
\end{equation*}
The inclusion part of the statement follows from (\ref{mm_Cp_Cfp_1}) and the relation (\ref{Min_J}) from Proposition \ref{Min_max_motifs}. 
\end{proof}

\begin{example}\label{main_example_cont}
We continue Example \ref{main_example}. By (\ref{main_ex_2}) we have
\[\mathrm{MaxMot}(^c(\CC^{[p]}))=\{*\!*\!1\!*\!1*, \,1\!*\!1\!*\!**,\, *1\!*\!1\!*\!1\}.\]
Now using the same technique as in Example \ref{main_example} (for finding $\mathrm{MaxMot}(^c(\CC^p))$ given $\mathrm{MaxMot}(\CC^p)$) we find here that 
\begin{align*}
\mathrm{MaxMot}(\CC^{[p]}) & = \{*00\!*\!**, \;\; *\!*\!00\!*\!*, \;\; *\!*\!0\!*\!*0, \;\; 00\!*\!*0*, \\
                                                  & \quad \;\;\; \;0\!*\!*00*, \;\; 0\!*\!*\!*\!00\}.
\end{align*} 
Hence by Proposition \ref{Min_max_motifs} we have:
\begin{align*}
\mathrm{Min}(J_{\CC^p}) & = \{(X_2, X_3), \; (X_3, Y_1), \; (X_1, Y_2, Y_3)\}=\mathrm{Min}^p(J_\CC),\\
\mathrm{Min}(J_{\CC^{[p]}}) & = \{(X_2, X_3), \; (X_3, Y_1), \; (X_1, Y_2, Y_3),\\
                                           & \hspace{2.6cm}(X_3, Y_3), \; (X_1, X_2, Y_2), \; (X_1, Y_1, Y_2)\}.
\end{align*}
The minimal prime ideals of $J_{\CC}^p$ (i.e., $J_{\CC^{[p]}}$) were also calculated in \cite[Example 5.4]{gjs} in a different way. 

Note that among the minimal primes of $J_{\CC^{[p]}}$ we have, in addition to all the minimal primes of $J_{\CC^p}$, three non-polar minimal primes, namely $\p_{**0**0}=(X_3, Y_3)$, $\p_{00**0*}=(X_1, X_2, Y_2)$,  and $p_{0**00*}=(X_1, Y_1, Y_2)$. A natural question to ask is the following one: if for an $\am\in \M^{2n}$ we have $\p_\am\in\mathrm{Min}(J_{\CC^{[p]}})$, how is then the motif $\am$ related to $\CC$? 
A statement related to this question is given in the next section in Theorem \ref{thm_gjs}, which is \cite[Theorem 5.1]{gjs}. We will give a different proof of this theorem.
\end{example}

\begin{theorem}\label{irredundant_primary_decomposition}
For any code $\CC\subseteq \F_2^n$, the ideal $J_{\CC^p}$ has the unique irredundant primary decomposition and it is obtained by polarizing the prime ideals from the unique irredundant primary decomposition of $J_\CC$.
\end{theorem}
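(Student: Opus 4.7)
The plan is to reduce this theorem essentially to an assembly of results already proved. By the proposition stating that $J_\CC = \cap\{\p_\am : \am\in \mathrm{MaxMot}(\CC)\}$ is the unique irredundant primary decomposition (applied to an arbitrary non-empty neural code in any $\F_2^N$), I apply that result directly to the code $\CC^p\subseteq \F_2^{2n}$. This immediately yields that $J_{\CC^p}$ admits the unique irredundant primary decomposition
\[J_{\CC^p} = \bigcap\{\p_\dm \;:\; \dm \in \mathrm{MaxMot}(\CC^p)\}.\]
The (routine) edge case $\CC=\emptyset$, where $\CC^p=\emptyset$ as well, can be handled separately or simply excluded.

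Next I identify the primes on the right-hand side. By Theorem \ref{maximal_motifs}, $\mathrm{MaxMot}(\CC^p) = \mathrm{MaxMot}^p(\CC)$, so the indexing motifs $\dm$ are exactly the polar motifs $\am^p$ for $\am\in \mathrm{MaxMot}(\CC)$. By formula (\ref{formula20}), $\p_{\am^p} = \p_\am^p$, so the decomposition becomes
\[J_{\CC^p} = \bigcap\{\p_\am^p \;:\; \am\in \mathrm{MaxMot}(\CC)\}.\]

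Finally, applying the same primary-decomposition proposition to $\CC$ itself gives the unique irredundant primary decomposition
\[J_\CC = \bigcap\{\p_\am \;:\; \am\in \mathrm{MaxMot}(\CC)\},\]
and comparing the two displayed formulas shows that the unique irredundant primary decomposition of $J_{\CC^p}$ is obtained by replacing each prime $\p_\am$ in the decomposition of $J_\CC$ by its polarization $\p_\am^p$. There is no real obstacle here; the only care needed is to invoke the right previous result for the right code ($\CC^p$ versus $\CC$) and to cite Theorem \ref{maximal_motifs} and formula (\ref{formula20}) in the correct order so that the indexing of the intersection matches cleanly.
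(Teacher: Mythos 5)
Your proof is correct and follows exactly the paper's own argument: you invoke the irredundant primary decomposition result (Corollary 5.5 of Curto et al.) for both $J_\CC$ and $J_{\CC^p}$, then identify the indexing sets via Theorem \ref{maximal_motifs} and the primes via formula (\ref{formula20}). No differences in substance.
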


\begin{proof}
By \cite[Corollary 5.5]{civy}, the ideals $J_\CC$ and $J_{\CC^p}$ have the unique irredundant primary decompositions
\begin{align*}
J_\CC&=\cap\{\p_\am\;:\;\am\in\mathrm{MaxMot}(\CC)\},\\
J_\CC&=\cap\{\p_\bm\;:\;\bm\in\mathrm{MaxMot}(\CC^p)\}.
\end{align*}
Hence the statement follows from Theorem \ref{maximal_motifs} and the formula (\ref{formula20}).
\end{proof}

\section{Partial motifs}
\begin{definition}
We denote $\PM=\{0,1,*,\tu\}$. We say that this set is the {\it set of partial motifs of length $1$}. We define a partial order on $\PM$ by declaring that $0<*$ and $1<*$. Note that $\tu$ is comparable only with itself (the same holds for $0$ and $1$). We define a partial order on the set $\PM^n$ by declaring that $\am\le \bm$ if $a_i\le b_i$ for every $i\in [n]$. A {\it partial motif} ({\it of length $n$}) is an element of $\PM^n$. A {\it partial word} ({\it of length $n$}) is an element of $\PW^n=\{0,1,\tu\}^n$. The neurons $i\in [n]$ for which $w_i=\tu$ are said to be {\it inactive}. A {\it partial code} {\it (of length $n$)} is a subset of $\PW^n$. The {\it variety} of a partial motif $\am$ is the set of all partial words obtained by replacing all the stars in $\am$ by zeros and ones. It is denoted by $V_\am$. If $\CC\subseteq \PW^n$ is a a partial code, then $\am\in\PM^n$ is a {\it partial motif of $\CC$} if $V_\am\subseteq \CC$. The set of all partial motifs of a partial code $\CC$ is denoted by $\mathrm{ParMot}(\CC)$. The set of all maximal partial motifs of a partial code $\CC$ is denoted by $\mathrm{MaxParMot}(\CC)$.
\end{definition}

\begin{example}
We can think of the partial word $\wm=\tu 01\tu 00\tu 1$ as of a statement that the neurons $3$ and $8$ are firing, the neurons $2, 5$ and $6$ are not firing, and the neurons $1,4,$ and $7$ are inactive.
\end{example}

The set of all partial motifs $\am\in\PM^n$ (resp. partial words $\wm\in \PW^n$) such that $a_{i_1}=\dots =a_{i_k}=\tu$ (resp. $w_{i_1}=\dots =w_{i_k}=\tu$) and all the remaining neurons are active, is denoted by $\PM^n_{i_1,\dots, i_k}$  (resp. 
 $\PW^n_{i_1,\dots, i_k})$. It is naturally in a bijective correspondence with the set $\M^{n-k}$ (resp. $\F_2^{n-k}$).
If $\am$ (resp. $\wm$) is a motif (resp. word), then the partial motif (resp. partial word) obtained by replacing each $a_i$ (resp. $w_i$), $i=i_1,\dots, i_k$, by $\tu$ is called the {\it partial motif} (resp. {\it partial word}) {\it obtained by deactivating the neurons $i_1, \dots, i_k$} and is denoted by $a^\tu_{i_1,\dots, i_k}$ (resp. $w^\tu_{i_1,\dots, i_k}$).

If $\CC\subseteq\F_2^n$ is a code and $\{i_1,\dots, i_k\}\subseteq [n]$, then the code obtained by replacing each $w_{i_r}$ $(r=1,\dots, k)$ by $\tu$  in each word $\wm\in\CC$ is called the {\it partial code obtained by deactivating the neurons $i_1,\dots, i_k$} and is denoted by $\CC^{^\tu}_{i_1,\dots, i_k}$. The partial code $\CC^{^\tu}_{i_1,\dots, i_k}$ is naturally in a bijective correspondence with the code $\CC_{i_1,\dots, i_k}$ obtained by deleting the neurons $i_1,\dots, i_k$.  

\begin{proposition}
Let $\CC$ be a code in $\F_2^n$ and let $\wm\in\PW^n_{i_1,\dots, i_k}$. If $\wm$ is not an element of $\CC^{^\tu}_{i_1,\dots, i_k}$, then the motif $\am$ obtained from $\wm$ by replacing each $\tu$ by $*$ belongs to $\mathrm{Mot}(^c\CC)$.
\end{proposition}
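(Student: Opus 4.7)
The plan is to unpack the definitions and then observe that the statement is essentially tautological once the correspondences are set up correctly.

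First I would clarify what the three objects in the hypothesis look like componentwise. The partial word $\wm \in \PW^n_{i_1,\dots,i_k}$ satisfies $w_{i_r} = \tu$ for each $r = 1,\dots,k$, while all other $w_j$ lie in $\F_2$. The partial code $\CC^{\tu}_{i_1,\dots,i_k}$ is the image of $\CC$ under the componentwise operation that replaces the $i_r$-th coordinate by $\tu$. Hence membership $\wm \in \CC^{\tu}_{i_1,\dots,i_k}$ is equivalent to the existence of some $\wm' \in \CC$ with $w'_j = w_j$ for every active index $j \notin \{i_1,\dots,i_k\}$. The negation, which is our hypothesis, therefore reads: no word in $\CC$ matches $\wm$ on the active coordinates.

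Next I would describe the variety of the motif $\am$ obtained from $\wm$ by replacing each $\tu$ by $*$. By definition of $V_\am$, this variety is precisely the set of words in $\F_2^n$ obtained by filling in the stars of $\am$ with $0$s and $1$s; but the stars of $\am$ are exactly the inactive coordinates of $\wm$, and the non-star coordinates of $\am$ agree with the corresponding non-$\tu$ coordinates of $\wm$. Thus $V_\am$ equals the set of all $\wm'' \in \F_2^n$ that coincide with $\wm$ on the active coordinates.

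Combining the two observations gives the result immediately. By the hypothesis, no element of $V_\am$ can lie in $\CC$; equivalently, $V_\am \subseteq \,^c\CC$, which is exactly the statement $\am \in \mathrm{Mot}(^c\CC)$. There is no real obstacle here — the only delicate point, and the one I would state carefully in the write-up, is the bijective identification between partial words in $\PW^n_{i_1,\dots,i_k}$ (fillings of $\wm$) and completions of the motif $\am$ (fillings of its stars), since this is where the equivalence of the two characterizations of $V_\am$ lives.
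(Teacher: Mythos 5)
Your proof is correct and is exactly the argument the paper has in mind; the paper's own proof reads only ``Easy to see,'' and your write-up is the honest unpacking of that claim via the identifications $V_\am = \{\vm\in\F_2^n : v_j = w_j \text{ for all } j\notin\{i_1,\dots,i_k\}\}$ and $\wm\in\CC^{^\tu}_{i_1,\dots,i_k} \Leftrightarrow V_\am\cap\CC\neq\emptyset$.
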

\begin{proof}
Easy to see.
\end{proof}

\begin{definition}\label{pol_part_motifs}
If $\am\in\PM^n$, then we define its {\it polarization} $\am^p=b_1\dots b_nb_{n+1}\dots b_{2n}\in \PM^{2n}$ defined by:
\begin{align*}
&b_i=0,\; b_{n+1}=*, \quad  \text{when } a_i=0;\\
&b_i=*,\; b_{n+1}=0, \quad  \text{when } a_i=1;\\
&b_i=*,\; b_{n+1}=*, \quad  \text{when } a_i=*;\\
&b_i=\tu, \; \;b_{n+1}=\tu, \quad\, \text{when } a_i=\tu.
\end{align*}
A partial motif $\bm\in\PM^{2n}$ is called a {\it polar partial motif} if $\bm=\am^p$ for some partial motif $\am\in\PM^n$. Then $\am=\bm^d$ is called the {\it depolarization} of the polar partial motif $\bm$. 
\end{definition}

The next theorem is a slight reformulation of Theorem 5.1 from \cite{gjs}. We give a different proof.

\begin{theorem}[{\cite [Theorem 5.1]{gjs}}]\label{thm_gjs}
Let $\cm\in \M^{2n}$ and let $\am\in \M^{2n}$ be the motif obtained by replacing all ones in $\cm$ by stars.
 Let $\{i_1,\dots, i_k\}$ be the set of all elements $i$ of $[n]$ such that $a_i=a_{i+n}$. Then $\p_\cm\supseteq J_{\CC^{[p]}}$ if and only if 
$\am^{\tu\, d}_{i_1,\dots, i_k, i_1+n,\dots, i_k+n} \in \mathrm{ParMot}(\CC^{^{\tu}}_{i_1,\dots, i_k})$. 
\end{theorem}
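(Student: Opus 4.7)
The strategy is to reformulate both sides as purely combinatorial conditions and then match them via the disjointness machinery of Section~3.

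\textbf{Reformulating the LHS.} By Proposition~\ref{Min_max_motifs}, $\p_\cm \supseteq J_{\CC^{[p]}}$ is equivalent to $\cm \in \mathrm{Mot}(\CC^{[p]})$. By Theorem~\ref{CF_main} combined with Proposition~\ref{CF_max_mot_compl}, the canonical form of $J_{\CC^{[p]}}$ is $\{L_\bm^p : \bm \in \mathrm{MaxMot}(\DC)\}$, where $\DC = {}^c\CC$; each $L_\bm^p = \prod_{b_i=1} X_i \prod_{b_j=0} Y_j$ is a squarefree monomial. Because $\p_\cm$ is a motivic prime, containing $X_i$ exactly when $c_i=0$ and $Y_j$ exactly when $c_{j+n}=0$, the containment $L_\bm^p \in \p_\cm$ is equivalent to the existence of $i \in [n]$ with either ($b_i=1$ and $c_i=0$) or ($b_i=0$ and $c_{i+n}=0$). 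Since replacing ones in $\cm$ by stars preserves the zero positions, $c_i=0 \iff a_i=0$, so the condition depends only on $\am$ and reads: for every $\bm \in \mathrm{MaxMot}(\DC)$, there exists $i \in [n]$ with ($b_i=1, a_i=0$) or ($b_i=0, a_{i+n}=0$).

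\textbf{Reformulating the RHS.} I compute $\am^{\tu\,d}_{i_1,\dots,i_k,i_1+n,\dots,i_k+n}$ explicitly. At positions in $\{i_1,\dots,i_k\}$ (the positions where $a_i=a_{i+n}$, i.e., Case~1 pattern $(0,0)$ or Case~4 pattern $(*,*)$), both coordinates become $\tu$ and depolarize to $\tu$. At the remaining positions, $(a_i, a_{i+n})$ is the polar pattern $(0,*)$ or $(*,0)$, depolarizing to $0$ or $1$. Thus $\am^{\tu\,d}$ is a partial word with no stars, and its variety is a singleton, so membership in $\mathrm{ParMot}(\CC^\tu_{i_1,\dots,i_k})$ is equivalent to there being a word $\vm \in \CC$ with $v_i = 0$ whenever $(a_i,a_{i+n})=(0,*)$ and $v_i = 1$ whenever $(a_i,a_{i+n})=(*,0)$.

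\textbf{Proving the equivalence.} For the reverse direction, assume such $\vm \in \CC$ exists and take any $\bm \in \mathrm{MaxMot}(\DC)$. By Proposition~\ref{disjoint_complement}, $\bm$ is disjoint from every $\am' \in \mathrm{MaxMot}(\CC)$; picking $\am' \ge \vm$ and invoking disjointness produces a coordinate $i$ where $\bm$ and $\am'$ (hence $\vm$) disagree; tracing the value of $\vm_i$ back through the defining cases for Case~2 and Case~3 yields the required index witnessing $\varphi(\bm)$. For the forward direction, assume the LHS combinatorial condition. Consider the motif $\bm^\circ \in \M^n$ with $b^\circ_i = 0$ at Case~2 positions, $b^\circ_i = 1$ at Case~3 positions, and $b^\circ_i = *$ otherwise; a direct check shows this motif satisfies $\neg \varphi(\bm^\circ)$, so by downward closedness of $\mathrm{Mot}(\DC)$ and the LHS it cannot lie in $\mathrm{Mot}(\DC)$, which means $V_{\bm^\circ} \not\subseteq \DC$, supplying a word $\vm \in \CC \cap V_{\bm^\circ}$, which is precisely the witness needed by the RHS.

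\textbf{Main obstacle.} The delicate point is handling the positions $i$ with $a_i = a_{i+n} = *$, since these contribute no constraint at all to $\varphi(\bm)$ and are completely deactivated on the RHS; the argument must show that, despite their apparent interference, no maximal motif of $\DC$ can exploit them to evade the combinatorial condition. The resolution comes from the fact that $\varphi$ is downward-closed and that the maximal motifs of $\DC$ are characterized by Proposition~\ref{disjoint_complement} using only the maximal motifs of $\CC$, so the star-star positions wash out once the disjointness machinery is applied correctly.
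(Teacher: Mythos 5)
Your high-level strategy mirrors the paper's: both reformulate $\p_\cm\supseteq J_{\CC^{[p]}}$ as a per-$\bm$ condition over $\bm\in\mathrm{MaxMot}(\DC)$ (you via monomial membership in $\p_\cm$, the paper via the motif-addition/disjointness language; these are the same condition, and your translation is correct), and then both try to match it against the depolarized partial motif. The gap is exactly what you flag under ``Main obstacle,'' and it is not resolvable: the $(*,*)$ positions do \emph{not} wash out.

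The underlying issue is a discrepancy in the paper itself. The theorem statement says $\{i_1,\dots,i_k\}$ is the set of $i$ with $a_i=a_{i+n}$, which since $\am\in\{0,*\}^{2n}$ covers both the $(0,0)$ and $(*,*)$ patterns. The paper's proof, however, explicitly takes $\{i_1,\dots,i_k\}$ to be the $i$ with $a_i=a_{i+n}=0$ only, so that $\am^{\tu\,d}$ retains a genuine $*$ at each $(*,*)$ position and $V_{\am^{\tu\,d}}$ is not a singleton. You followed the statement verbatim, producing a partial \emph{word} $\am^{\tu\,d}$ with singleton variety; the resulting condition ``$\exists\,\vm\in\CC$ with prescribed coordinates at the $(0,*)/(*,0)$ positions'' is strictly weaker than the paper's ``$V_{\am^{\tu\,d}}\subseteq\CC^\tu$,'' which quantifies universally over the $(*,*)$ coordinates. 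Your forward implication survives (it proves the weaker RHS), but the reverse fails.

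Concretely, in the paper's Example~\ref{main_example} with $\CC=\{000,100,110,011\}$, take $\cm=\am=0\!*\!*\!*\!*\!*\in\M^{6}$. Then the positions with $a_i=a_{i+3}$ are $\{2,3\}$ (both $(*,*)$), so under your reading $\am^{\tu\,d}_{2,3,5,6}=0\tu\tu$. Since $\CC^\tu_{2,3}=\{0\tu\tu,1\tu\tu\}$ we have $0\tu\tu\in\mathrm{ParMot}(\CC^\tu_{2,3})$, so your RHS holds. But $\p_\cm=(X_1)$ does not contain $X_3Y_2\in CF(J_{\CC^{[p]}})$, so $\p_\cm\not\supseteq J_{\CC^{[p]}}$. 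This breaks the biconditional under your interpretation. (Under the paper's reading, $k=0$, $\am^{\tu\,d}=0\!*\!*$, and $V_{0**}\ni 001\notin\CC$, so the RHS is also false, consistent with the LHS.) In your reverse-direction argument you can see exactly where this bites: when the disjointness witness between $\bm\in\mathrm{MaxMot}(\DC)$ and $\am'\ge\vm$ sits at a $(*,*)$ position, both $a_i$ and $a_{i+n}$ are $*$ and neither clause of $\varphi(\bm)$ is triggered, so no contradiction is obtained. The fix is to keep the $(*,*)$ positions active (depolarizing to $*$), which forces the witness $\vm$ to exist for \emph{every} assignment of the $(*,*)$ coordinates and restores the equivalence.
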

\begin{proof}
Let $\DC=\, ^c\CC$. Since
\[\mathrm{MaxMot}(^c(\CC^{[p]}))=\{\,\overline{\overline{\bm}^p}\;:\;\bm\in\mathrm{MaxMot}(\DC)\},\]
the motifs $\cm$ of $\CC^{[p]}$ are the motifs from $\M^{2n}$ that are disjoint with all $\overline{\overline{\bm}^p}$, $\bm\in\mathrm{MaxMot}(\DC)$. They give the primes $\p_\cm$, and these are all the motivic primes that contain $J_{\CC^{[p]}}$. 
Note, however, that $\p_\cm\supset J_{\CC^{[p]}}$ if and only if $\p_\am\supset J_{\CC^{[p]}}$.
It follows that a motivic prime $\p_\cm$ contains $J_{\CC^{[p]}}$ if and only if $\am+\overline{\overline{\bm}^p}$ has at least one component equal to $1$, or, equivalently, such that 
\begin{equation}
\am+\overline{\overline{\bm}^p}<*\dots *\label{eq_star}
\end{equation}
(as $\am\in\{0,*\}^{2n}$ and $\overline{\overline{\bm}^p}\in \{1,*\}^{2n}$) for every $\bm\in\mathrm{MaxMot}(\DC)$.
Let $\{i_1,\dots, i_k\}$ be the set of elements $i$ of $[n]$ such that $a_i=a_{i+n}=0$. (This includes the possibi\-li\-ty $k=0$.) Let $\am^{\tu\, d}$ denote $\am^{\tu\, d}_{i_1,\dots, i_k, i_1+n,\dots, i_k+n}$. The statement of the theorem follows if we now justify the claim that $\am$ satisfies (\ref{eq_star}) if and only if each partial word $\wm\le \am^{\tu\, d}$ belongs to $\CC^{^{\tu}}_{i_1,\dots, i_k}$, i.e., if and only if $\am^{\tu\, d}\in\mathrm{ParMot}(\CC^{^{\tu}}_{i_1,\dots, i_k})$. The necessity is clear. For the sufficiency, there would otherwise be a partial word $\wm\le \am^{\tu\, d}$ which is not coming from any partial word in $\CC^{^{\tu}}_{i_1,\dots, i_k}$. Let $\wm^\ast$ be the motif obtained by replacing each $\tu$ in $\wm$ by $*$. Then $\wm^\ast\in\mathrm{Mot}(\DC)$, hence $\overline{\overline{\wm^\ast}^p}\in\mathrm{Mot}(^c(\CC^{[p]}))$. This motif would have stars at all the components at which $\am$ has zeros and ones or stars at all other components. Hence $\am+\overline{\overline{\wm^*}^p}=*\dots *$, contradicting to the asumption that $\am$ satisfies (\ref{eq_star}).
\end{proof}


\begin{example}\label{main_example_cont2}
In the context of the examples (\ref{main_example}) and (\ref{main_example_cont}), 
let $\cm=00\!*\!*0*\in\M^6$. Then $\am=\cm$ and 
\[\am^{\tu\, d}_{2,2}\;=\;0\tu\! *\!*\tu *^d\;=\;0\tu\!*.\]
Also
\[\CC^{^\tu}_2=\{000, \,100, \,110,\, 011\}^{^\tu}_2 = \{0\tu 0, \,1\tu 0,\, 0\tu 1\}.\]
Hence
\[\mathrm{MaxParMot}(\CC^{^\tu}_2) \;=\;\{*\tu 0,\, 0\tu *\}.\]
Thus 
\[\am^{\tu\, d}_{2,2} \in \mathrm{MaxParMot}(\CC^{^\tu}_2),\]
so that 
\[\p_\cm = (X_1, X_2, Y_2) \supseteq J_{\CC^{[p]}}.\]
In fact we have 
\[\p_\cm = (X_1, X_2, Y_2) \in\mathrm{Min}(J_{\CC^{[p]}}).\]

\medskip
Let now $\cm=0\!*\!0\!*\!*0\in\M^6$. Then $\am=\cm$ and 
\[\am^{\tu\, d}_{3,3}\;=\;0\!*\!\tu\!*\!*\tu^d\;=\;0\!*\!\tu.\]
Also
\[\CC^{^\tu}_3=\{000, \,100, \,110,\, 011\}^{^\tu}_3 = \{00\tu, \,10\tu,\, 11\tu, \, 01\tu\}.\]
Hence
\[\mathrm{MaxParMot}(\CC^{^\tu}_3) \;=\;\{*\!*\!\tu\}.\]
Thus 
\[\am^{\tu\, d}_{3,3} \in \mathrm{ParMot}(\CC^{^\tu}_3),\]
so that 
\[\p_\cm = (X_1, X_3, Y_3) \supseteq J_{\CC^{[p]}}.\]
Note that 
\[\p_\cm = (X_1, X_3, Y_3) \notin \mathrm{Min}(J_{\CC^{[p]}})\]
since it was shown in Example \ref{main_example_cont} that the prime $(X_3, Y_3)$ is a minimal prime of $J_{\CC^{[p]}}$.

\medskip
Finally, let $\cm=100\!*\!0*\in\M^6$. Then $\am=*00\!*\!0*$ and 
\[\am^{\tu\, d}_{2,2}\;=\;*\tu 0\!*\!\tu*^d\;=\;*\tu 0.\]
Also
\[\CC^{^\tu}_2=\{000, \,100, \,110,\, 011\}^{^\tu}_2 = \{0\tu 0, \,1\tu 0,\, 0\tu 1\}.\]
Hence
\[\mathrm{MaxParMot}(\CC^{^\tu}_2) \;=\;\{*\tu 0, \;0\tu 1\}.\]
Thus 
\[\am^{\tu\, d}_{2,2} \in \mathrm{MaxParMot}(\CC^{^\tu}_2),\]
so that 
\[\p_\am = (X_2, X_3, Y_2) \supseteq J_{\CC^{[p]}}\]
and
\[\p_\cm = (1-X_1, X_2,X_3, Y_2) \supseteq J_{\CC^{[p]}}.\]
Note that 
\[\p_\am \notin \mathrm{Min}(J_{\CC^{[p]}})\]
even though $\am^{\tu\, d}_{2,2} \in \mathrm{MaxParMot}(\CC^{^\tu}_2)$
since it was shown in Example \ref{main_example_cont} that the prime $(X_2, X_3)$ is a minimal prime of $J_{\CC^{[p]}}$.

\end{example}

\bigskip
\small

\end{document}